\def\ind{\hspace{0.25in}}
\def\fP{\em \sf}
\def\cir{\makebox[0.5cm]{$\circ$}}
\def\IR{\mathop{\mbox{\rm Sround}}\nolimits}
\newcommand{\intg}[1]{\lfloor #1\rfloor} 
\newtheorem{lemma}{Lemma}[]
\newtheorem{theorem}{Theorem}[]
\newtheorem{remark}{Remark}[]
\newtheorem{alg}{Algorithm}[]
\newcommand{\R}{\ensuremath{\mathbb{R}}}
\newcommand{\g}{q}
\newcommand{\h}{\tilde f}
\newcommand{\VV}{\mathbb Var }
\newcommand{\CC}{\mathbb Cov}
\renewcommand{\epsilon}{\varepsilon}
\renewcommand{\phi}{\varphi}
\begin{document}
	
	\title{Mean-field control variate methods for kinetic equations with uncertainties and applications to socio-economic sciences}
	\author{Lorenzo Pareschi\thanks{Department of Mathematics and Computer Science, University of Ferrara, Italy (\texttt{lorenzo.pareschi@unife.it})},\qquad
	 Torsten Trimborn\thanks{Institut f{\"u}r Geometrie und Praktische Mathematik, RWTH Aachen University, Germany (\texttt{trimborn@igpm.rwth-aachen.de})},\qquad
	 Mattia Zanella\thanks{Department of Mathematics "F. Casorati", University of Pavia, Italy (\texttt{mattia.zanella@unipv.it})}}
	\maketitle
	
\begin{abstract}
In this paper, we extend a recently introduced multi-fidelity control variate for the uncertainty quantification of the Boltzmann equation to the case of kinetic models arising in the study of multiagent systems. For these phenomena, where the effect of uncertainties is particularly evident, several models have been developed whose equilibrium states are typically unknown. In particular, we aim to develop efficient numerical methods based on solving the kinetic equations in the phase space by Direct Simulation Monte Carlo (DSMC) coupled to a Monte Carlo sampling in the random space. To this end, exploiting the knowledge of the corresponding mean-field approximation we develop novel mean-field Control Variate (MFCV) methods that are able to strongly reduce the variance of the standard Monte Carlo sampling method in the random space. We verify these observations with  several numerical examples based on classical models , including wealth exchanges and opinion formation model for collective phenomena. 
 \smallskip
 
{\textbf{Keywords:} uncertainty quantification, kinetic equations, mean field approximations, control variate methods, Monte Carlo methods, stochastic sampling, multi-fidelity methods} 
\end{abstract} 

\tableofcontents

\section{Introduction}
In recent years kinetic theory emerged as a sound theoretical framework to describe a wide range of collective phenomena in socio-economy and life sciences for systems composed by a sufficiently large number of agents \cite{toscani2006kinetic,cordier2005kinetic,cordier2009mesoscopic,during2007hydrodynamics,during2018kinetic,furiolifokker,carrillo2010particle, trimborn2017portfolio}. For an introduction to these topics we refer to the recent surveys and monographs \cite{pareschi2013interacting,naldi2010mathematical,bellomoactive1, bellomoactive2}.   

Nevertheless, the design of realistic models for the description of human behavior has to face the lack of first principles and the dynamics are often inferred from empirical observations based on experimental data \cite{Parisi_etal,BFHM,HTVZ}. Especially in the field of socio-economic applications the precise form of the microscopic interactions is largely unknown. Thus, one typically constructs microscopic social forces which are able to qualitatively fit the macroscopic behavior of the system. In the context of kinetic modelling, this issue can be translated in structural uncertainties present both in initial observations and interaction rules, which can be considered in the form of uncertain parameters of the model depending on random quantities. Hence, the quantification of uncertainties in such social microscopic interactions on the observable behavior is of major importance. 

The introduced uncertainties inevitably increase the dimensionality of the problems. Therefore, we need to develop new numerical methods to efficiently quantify the impact of unknown quantities on the overall dynamics.  Among the various methods for uncertainty quantification we find intrusive stochastic Galerkin (sG) methods which provide spectral convergence towards the solution of the problem under suitable regularity assumptions \cite{HJ,poette2009uncertainty,zhu2017multi}. Beside sG methods we find non-intrusive approaches for UQ which does not require strong modification of the numerical scheme for the deterministic problem, like stochastic collocation methods. Such methods are non-intrusive, easy to parallelize \cite{XH} and, in principle, do not require any knowledge of the probability distribution of the uncertain parameter. 
We refer the interested reader to \cite{dimarco2017uncertainty,HJ,GJL,jin2018uncertainty,pareschi_review,pettersson2015polynomial,PWG,xiu2010numerical,jingwei2020}  for an additional overview on numerical methods for uncertainty quantification of hyperbolic and kinetic equations. 

In this work we concentrate on a non-intrusive approach based on a stochastic Monte Carlo (MC) sampling for Boltzmann-type equations. In in comparison to sG methods, techniques based on MC sampling  have a lower impact on the curse of dimensionality \cite{caflisch1998monte,LZ, giles2015multilevel, mishra2012multi, pareschi2013interacting}.
In details, following the methodology recently introduced in \cite{dimarco2018multi, dimarco2020multiscale}
for rarefied gas dynamics, we develop a variance reduction method based on a control variate approach which exploits a micro-macro-type decomposition of the solution. In the simplest setting proposed in \cite{dimarco2018multi} as a surrogate model to reduce the variance of the MC estimator, the corresponding Maxwellian steady state solution has been used. In many socio-economic applications, however, the equilibrium states of the Boltzmann models are unknown and therefore it is an open problem the determination of a suitable surrogate model that can be used as control variate.

In the present paper, we propose to use as surrogate model the corresponding mean field model of Fokker-Planck type obtained as an approximation of the original Boltzmann model in a grazing-type limit \cite{pareschi2013interacting, villani1998new}. 
In fact there are several reasons for adopting this control variate. First, if the mean-field model's steady state is known, we can use it directly as a control variate. Second, more in general, we can use the whole solution of the Fokker-Planck model as a control variate. This is possible thanks to the recently introduced structure preserving schemes for Fokker-Planck equations that preserve exactly the steady state of the equation \cite{pareschi2018structure}. The latter choice can be generally applied whereas for the former the steady state of the mean field model needs to be known. We call this novel variance reduction method, mean field control variate (MFCV) method. 

Furthermore, our mean field control variate approach is not based on a deterministic solver of the Boltzmann model but on a Direct Simulation Monte Carlo (DSMC) solver. This is beneficial since DSMC solver are widely used in order to solve Boltzmann type equations \cite{cercignani2000rarefied, pareschi2001introduction, pareschi2013interacting} and can be easily applied to generalized Boltzmann model simply by exploiting the rules defining the microscopic dynamics. This is an additional difficulty since our method needs to couple the deterministic solution of the mean field model with the DSMC solver of the Boltzmann equation. In this respect, our MFCV method can be regarded as a hybrid method in the phase space. 
The crucial assumptions of the MFCV method are the following:  First, the mean field model has to be less expensive to solve than the original kinetic model. Secondly, the collision regime of the space homogeneous Boltzmann model needs to be sufficiently close to the grazing collision regime.  

The rest of the manuscript is organized as follows: In the next section we introduce a general Boltzmann model well suited for socio-economic applications. Furthermore, we recall the derivation of the corresponding mean field model and discuss some examples. In Section \ref{prel} we introduce the basics of DSMC methods and shortly discuss the MC sampling method for uncertainty quantification. Then we present in Section \ref{MFCV} the novel MFCV method and emphasize its potential advantages in reducing the variance of the estimator used for uncertainty quantification. Several examples in the subsequent section demonstrate the advantages of the MFCV method in comparison to the standard MC technique. We finish this study with a short discussion of the present method and comment on further developments.

\section{One-dimensional kinetic models with uncertainties}\label{model}

Let us consider a general binary interaction model with uncertain mixing  \cite{dimarco2017uncertainty, tosin2017boltzmann}.  The pair of interacting agents is characterized by the pre-interaction states $v,w\in V \subseteq \mathbb R$ and the post-interaction states $v^{\prime}, w^{\prime} \in V$ are obtained as follows
\begin{equation}
\label{micro}
\begin{split} 
v^{\prime} &= v + \epsilon [(p_1(z)-1) v+ q_1(z)w] +D(v,z)\ \eta_\epsilon,\\
w^{\prime} &= w + \epsilon [p_2(z) v+ (q_2(z)-1) w] +D(w,z)\ \eta_\epsilon,
\end{split}
\end{equation}
where $\epsilon >0$ is a given constant, $p_i,q_i$,  $i=1,2,$ are suitable interaction functions depending on a random variable $z \in \Omega \subseteq \mathbb R^{d_z}$, $d_z\geq 1$. Furthermore, $\eta_\epsilon$ is a random variable with zero mean and variance $\sigma^2_\epsilon$ and the function $D(\cdot,z)$ is the local relevance of the diffusion. We will assume that the post-interaction states $w^{\prime}, v^{\prime}$ remain in the set $V$ up to the introduction of suitable conditions on $\eta_\epsilon$, see \cite{toscani2006kinetic}.

To describe the evolution of a large system of agents undergoing binary interaction we adopt a kinetic approach. Hence, we introduce the distribution function $f = f(t,w,z)$, such that $f(t,w,z)dw$ is the fraction of agents, at time $t\ge 0$, characterized by a state comprised between $w$ and $w + dw$ and parametrized by the uncertainty $z$. The evolution of $f$ is given in terms of the Boltzmann-type model for Maxwellian interactions
\begin{equation}\label{BoltzmannModel}
\begin{split}
&\frac{d}{dt} \int_V \varphi(w) f(t,w,z) dw\\
&\qquad = \dfrac{1}{\epsilon} \left\langle\iint_{V^2}  (\varphi(w^{\prime})-\varphi(w))\ f(t,w,z) f(t,v,z) dwdv \right \rangle,
\end{split}
\end{equation}
being $\varphi:V\rightarrow \mathbb R$ any observable quantity which may be expressed as a function of the microscopic state $w$ of the agents. The symbol $\left\langle \cdot \right\rangle$ denotes the expectation with respect to $\eta_\epsilon$. The model \eqref{BoltzmannModel} can be also   complemented with uncertainties on the initial condition  $f(0,w,z)=f_0(w,z)$. 

We can recast model \eqref{BoltzmannModel} in symmetric form as follows
\begin{equation}\label{SymBoltzmannModel}
\begin{split}
&\frac{d}{dt} \int_V f(t,w,z)\ \varphi(w) dw \\
&\qquad= \frac{1}{2\epsilon} \left\langle\iint_{V^2} (\varphi(w^{\prime}) +\varphi(v^{\prime})-\varphi(w)-\varphi(v)) f(t,w,z) f(t,v,z)    dwdv \right\rangle
\end{split}
\end{equation}

Taking $\varphi(w)=1$ is easily seen that the number of agents is conserved in time.
The evolution of the mean is obtained for $\varphi(w) = w$ which gives
\[
\dfrac{d}{dt} m_f(t,z) = \dfrac{1}{2} \iint_{V^2} [(p_1(z) + p_2(z)-1)v + (q_1(z)+q_2(z)-1)w]f(t,w,z)f(t,v,z)dw\,dv,
\]
and the mean is conserved for $p_1(z)+p_2(z) = 1$, $q_1(z)+q_2(z)=1$, indeed at the microscopic level the binary mean is conserved provided 
$$
\langle w^{\prime}+ v^{\prime}\rangle = v+w + \epsilon [(p_1(z)+p_2(z)-1)v + (q_1(z)+q_2(z)-1)w] = v+w.
$$
Notice that any nonconserved moment of the distribution function $f$ explicitly depends on $z$.

\subsection{Mean-field approximation}
An extensive qualitative study of the previously introduced Boltzmann model is very challenging. In particular, the asymptotic behavior of socio-economic Boltzmann models like \eqref{BoltzmannModel} are unknown. Therefore we employ the quasi-invariant interaction limit  \cite{cordier2005kinetic,pareschi2006self,toscani2006kinetic} to derive a mean-field approximation of the Boltzmann model. 

In kinetic theory a mean field model can be derived by the grazing limit \cite{FPTT2012,pomraning1992fokker,villani1998new} of the Boltzmann equation. The main idea is to introduce a scaling parameter such that we have a small change in the kinetic variable after each interaction, while keeping the macroscopic properties of the model unchanged. 

Let us introduce a time scaling parameter $\epsilon>0$ and define
\begin{equation}
\tau=\epsilon  t,\quad  f_\epsilon(\tau,w,z)=f(\tau/\epsilon,w,z). 
\end{equation}
Then, the distribution $ f_\epsilon$ is solution to
\begin{equation}\label{ScaledBol}
\begin{split}
&\frac{d}{d\tau} \int_V \varphi(w){f_\epsilon}(\tau,w, z)\ dw\\
&\qquad = \frac{1}{2\epsilon}\left \langle \iint_{V^2} (\varphi(w^{\prime})+\varphi(v^{\prime})-\varphi(v)-\varphi(w))   {f_\epsilon}(\tau, w,z){f_\epsilon}(\tau,v,z)dwdv \right\rangle. 
 \end{split}
\end{equation}

Hence, scaling the variance of the introduced random variables as $\sigma^2_\epsilon = \epsilon {\sigma}^2$ we can observe that for $\epsilon \ll1 $ the interactions become quasi-invariant since the differences $w^\prime-w$ and $v^\prime-v$ are small. Assuming now $\varphi$ sufficiently smooth and at least $\varphi \in \mathcal C_0^3(V)$ we can perform the following Taylor expansions
\[
\begin{split}
\varphi(w^\prime) - \varphi(w) &= (w^\prime-w) \partial_w\varphi(w) + \dfrac{1}{2}(w^\prime-w)^2 \partial_w^2\varphi(w) + \dfrac{1}{6}(w^\prime-w)^3 \partial_w^3\varphi(\bar w), \\
\varphi(v^\prime) - \varphi(v) &= (v^\prime-v)\partial_v \varphi(v) + \dfrac{1}{2}(v^\prime-v)^2 \partial_v^2\varphi(v) + \dfrac{1}{6}(v^\prime-v)^3 \partial_v^3\varphi(\bar v),
\end{split}
\]
where $\bar w \in (\min\{w,w^\prime\}, \max\{w,w^\prime\})$, $\bar v \in (\min\{v,v^\prime\}, \max\{v,v^\prime\})$. Plugging the above expansion in \eqref{ScaledBol} we have

\[
\begin{split}
&\dfrac{d}{d\tau} \int_V \varphi(w) {f_\epsilon}(\tau,w,z) dw \\
&\quad= \dfrac{1}{2\epsilon} \Big[ \left\langle\iint_{V^2} [(w^\prime-w) \partial_w\varphi(w) + (v^\prime-v) \partial_v\varphi(v)]{f_\epsilon}(\tau,w,z) {f_\epsilon}(\tau,v,z)\,dv\,dw\right\rangle  \\
&\quad +\dfrac{1}{2} \left\langle\iint_{V^2} [(w^\prime-w)^2 \partial_w^2\varphi(w)+ (v^\prime-v)^2\partial_v^2\varphi(v)]{f_\epsilon}(\tau,w,z) {f_\epsilon}(\tau,v,z)\,dv\,dw \right\rangle \\
&\quad   + R^\epsilon_\varphi({f_\epsilon},{f_\epsilon}), 
\end{split}
\]
where $R_\varphi^\epsilon({f_\epsilon},{f_\epsilon})$ is a reminder term with the following form 
\[
R_\varphi^\epsilon({f_\epsilon},{f_\epsilon})  = \dfrac{1}{6\epsilon} \left\langle \iint_{V^2}[ (w^\prime - w)^3 \partial_w^3\varphi(w) +  (v^\prime - v)^3 \partial_v^3\varphi(v) ]{f_\epsilon}(\tau,w,z){f_\epsilon}(\tau,v,z)\,dv\,dw  \right\rangle. 
\]
Thanks to the assumed smoothness we can argue that $\varphi$ and its derivatives are bounded in $V$. Furthermore, since $\eta_\epsilon$ has bounded moment of order three $\left \langle |\eta_\epsilon|^3 \right \rangle<+\infty$, we can observe that in the limit $\epsilon \rightarrow 0^+$ we have
\[
|R_\varphi^\epsilon({f_\epsilon},{f_\epsilon})| \rightarrow 0. 
\]
Therefore, in the limit $\epsilon \rightarrow 0^+$, it can be shown that $ f_\epsilon$ converges, up to subsequences, to a distribution function $\h = \h(\tau,w,z)$ which is weak solution to the following Fokker-Planck equation 

\begin{equation}\label{FP}
\begin{split}
&\partial_\tau {\h}(\tau,w,z)+\partial_w\left[\left( \int_V P(v,w,z) \h(\tau,v,z)dv \right)\h(\tau,w,z)\right]= \frac{{\sigma}^2}{2} \partial_w^2(D^2(w,z) {\h}(\tau, w,z)), 
\end{split} 
\end{equation}
where 
\[
P(v,w,z) = \dfrac{1}{2} \left[(p_1(z)+q_2(z)-2)w + (p_2(z)+q_1(z))v \right]
\]
provided the following boundary conditions are satisfied for all $z \in \Omega$
\begin{equation}
\label{eq:BC}
\begin{split}
-\left(\int_V P(v,w,z) \h(\tau,v,z)dv \right)\h(\tau,w,z) +\dfrac{\sigma^2}{2} \partial_w(D^2(w,z) {\h}(\tau, w,z)\Bigg|_{w \in \partial V} = 0 \\
D^2(w,z)  \h(\tau,w,z) \Bigg|_{w \in \partial V} = 0. 
\end{split}
\end{equation}

The asymptotic analysis of equation \eqref{FP} is considerable simpler compared to the original Boltzmann-type model and  convergence towards a unique equilibrium distribution can be obtained under suitable hypotheses, see \cite{carrillo1998exponential,furiolifokker,toscani1999entropy}. Therefore, we have obtained a surrogate model with reduced complexity whose large time behavior can be more easily studied.  In particular, the steady state distribution ${\h}_{\infty}(w,z)$ of \eqref{FP}  is determined by imposing
\begin{equation}\label{SteadyEQ}
\frac{{\sigma}^2}{2} \partial^2_w(D^2(w,z) {\h}_{\infty}(w,z))  =    \partial_w \left[\left(\int_V P(v,w,z) \h_\infty(v,z)dv \right)\h_\infty(w,z)\right]. 
\end{equation}
In fact, for many models the solution of the differential equation \eqref{SteadyEQ} is known analytically. In the next paragraph we present some relevant examples of socio-economic Boltzmann models which fit in the previously introduced general model. 

\subsection{Examples in socio-economic sciences}\label{examples} 

We shortly present two socio-economic models namely a rather general opinion formation model \cite{toscani2006kinetic} and the Cordier-Pareschi-Toscani (CPT) model \cite{cordier2009mesoscopic}. As we will see, the models are characterized by different steady states in their mean-field approximation.

We consider first a model for opinion formation where $V = [-1,1]$ and the binary interaction rules can be framed in \eqref{micro} in the symmetric case, i.e. putting in evidence the additional dependence of interaction by the opinion variable $p_1= q_2 = q(|v-w|,z)$, $p_2 = q_1= p(|v-w|,z)$, with additionally $q(|v-w|,z) = 1-p(|v-w|,z)$. Hence, the binary interaction scheme simplifies to 
\begin{align}\label{InterOpinion}
\begin{split}
&v^{\prime}= v + \epsilon  p(|w-v|,z) (w-v)+ D(v,z)   \eta_\epsilon, \\
&w^{\prime}= w+\epsilon p(|v-w|,z)( v-w) +D(w,z) \eta_\epsilon,
\end{split}
\end{align}
where the function $0\leq p(|v-w|,z) \leq 1$ weights the compromise tendency with respect to the relative opinion $|v-w|$. In the present case, in order to produce post-interaction opinions in the interval $[-1,1]$,  the random variable $\eta_\epsilon$ should be such that for all $z \in \Omega$ and $v,w \in V$ we have
\[
\begin{split}
-1-v - \epsilon p(|v-w|,z)(w-v)\le D(v,z)\eta_\epsilon \le 1-v - \epsilon p(|v-w|,z)(w-v). 
\end{split}\]
In particular for $D(w,z) = D(w)=  1-w^2$ and $p(|v-w|,z) = p(z)$ we obtain the following bound $|\eta|(1+|w|) \le  (1-\epsilon \max\{ p(z)\})$. In the quasi-invariant opinion limit described above and provided the boundary conditions \eqref{eq:BC} are satisfied we reduce to study the following model 
\[
\begin{split}
&\partial_t \h(t,w,z) + \partial_w \left[\left(\int_V p(|v-w|,z)(v-w)\h(t,v,z)dv  \right) \h(t,w,z) \right] \\
&\qquad= \dfrac{\sigma^2}{2} \partial_w^2 (D^2(w,z) \h(t,w,z)).
\end{split}
\]

In this case, in the interaction scheme \eqref{InterOpinion} we easily see that the mean opinion $m(z)$ is conserved in time and, if we consider also uncertainties in the initial distribution, the steady state distribution of the Fokker-Planck model reads

\begin{equation}\label{eq:steady_max}
{\h}_{\infty}(w,z)= C(z)\ (1+w)^{-2+\frac{p(z) m(z)}{2{\sigma} ^2}}  (1-w)^{-2-\frac{p(z) m(z)}{2 {\sigma}^2}} \ \exp\left\{-\frac{p(z) (1- m(z)w)}{\sigma^2\ (1-w^2)} \right\},
\end{equation}
where $C(z)$  is a normalization factor such that $\int_V \h_{\infty}(w,z)dw = 1$. Other form of equilibrium distribution can be determined, for example the choice $D(w) = \sqrt{1-w^2}$ produce a Beta-type steady state of the form 
\begin{equation}\label{eq:steady_beta}
\h_{\infty}(w,z)= 2^{1-\frac{2}{{\sigma^2}}       }\frac{1}{\textrm{B}\Big( \frac{1+m(z)}{{\sigma^2}} ,   \frac{1-m(z)}{{\sigma^2}}  \Big)}\ (1+w)^{\frac{1+m(z)}{{\sigma^2}}-1}  (1-w)^{\frac{1-m(z)}{{\sigma^2}}-1},
\end{equation}
where $\textrm{B}(\cdot,\cdot)$ is the Beta function. We refer to \cite{PTTZ,toscani2006kinetic} for a detailed discussion.

The second example that we mention has been presented in \cite{cordier2005kinetic} and models wealth exchanges between agents composing a simple economy. The wealth variable is here allowed to take values on the positive half line therefore $V=\R^+$. Again, the considered binary interactions can be framed in \eqref{micro} in the symmetric case, i.e. $p_1 = q_2 = q(z)$ and $p_2 = q_1 = \lambda(z)$ with $q(z) = 1-\lambda(z)$. Furthermore, we consider here $D(w,z) = w$. The uncertain parameter $\lambda(z) \in [0,1]$ determines the proportion of wealth that a single agents wants to invest, the quantity $1-\lambda(z)$ is the so-called saving propensity. Thus, the binary scheme reads for wealth exchanges reads
\begin{equation}
\label{InterWealth}
\begin{split}
&v^{\prime}= (1-\epsilon\lambda(z))v +\epsilon\lambda(z)w+v \eta_\epsilon\\
&w^{\prime}= (1-\epsilon \lambda(z)) w +\epsilon\lambda(z) v+w \eta_\epsilon,
\end{split}
\end{equation}
with $|\eta_\epsilon| \le (1-\epsilon)$.  Additionally, we assume that there is no uncertainty in the initial conditions of our Boltzmann-type model. 
 Hence, the corresponding mean field model provided  reads
\begin{align*}
\partial_t {\h}(t, w,z) +\lambda(z)\partial_w \left[(m_{\h}(z)-w) {\h}(t,w,z) \right]
= \frac{{\sigma}^2}{2} \partial_w^2 (w^2 {\h}(t,w,z)).
\end{align*}
We observe that the uncertain mean wealth $m_{\h}(z)$ is conserved in time. Hence, the equilibrium state can be computed and reads
\begin{equation}
\label{eq:steady_invgamma}
{\h}_{\infty}(w,z)= \frac{(\mu(z)-1)^{\mu(z)}}{\Gamma(w)\ w^{1+\mu(z)}}\exp\left( -\frac{(\mu(z)-1)m_{\h}(z)}{w}\right) ,
\end{equation}
where $\Gamma(\cdot)$ denotes the gamma function and $\mu(z):=1+\frac{2\ {\lambda}(z)}{{\sigma}^2}$. Notice that in this case the steady state exhibits tails with polynomial decay determined by the uncertain quantity $\mu(z)$. 

\section{MC-DSMC methods for uncertain Boltzmann equations}\label{prel}
In this section we introduce a MC-DSMC method to solve the uncertain Boltzmann equation where both the physical variables as well as the uncertain parameters are solved by Monte Carlo approximations. The realization of the method represents the starting point for the construction of our mean-field control variate strategy.

\subsection{Direct Simulation Monte Carlo method} The efficient computation of the highly non-linear Boltzmann model is a major task and has been tackled by several studies \cite{dimarco2014numerical,pareschi2001introduction,PZ2020}. In deterministic methods the multi-dimensional integral of the collision kernel needs to be approximated by a quadrature formula which suffers the so-called curse of dimensionality. Additionally, preservation of the main physical quantities is a true challenge at the discrete level that makes the scheme design model dependent.
On the other hand, Monte Carlo methods for kinetic equations naturally employ the microscopic dynamics to satisfy the physical constraints and are much less sensitive to the curse of dimensionality \cite{caflisch1998monte}. The most popular examples of Monte Carlo methods for the Boltzmann equation are the classical Direct Simulation Monte Carlo (DSMC) methods by Bird and Nanbu \cite{bird1970direct,pareschi2001introduction,nanbu1980direct}. The convergence of the methods to the solution of the Boltzmann equation has been rigorously proven in \cite{babovsky1989convergence, wagner1992convergence}. For a detailed introduction to DSMC solvers, especially for socio-economic Boltzmann type equations of the form  \eqref{BoltzmannModel}, we refer to \cite{pareschi2013interacting}.

In order to summarize the classical DSMC algorithms for simplicity we focus on the case without uncertainty. 
We are interested in the evolution of the density $f=f(w,t)$ solution of \eqref{ScaledBol} with initial condition $f(0,w)= f_0(w)$. We recall the symmetrized version of the simulation algorithm originally proposed by Nambu \cite{nanbu1980direct}, in a similar way one can consider Nanbu's algorithm \cite{bird1970direct}. Let us consider a time interval $[0,T]$, and let us
discretize it in $n_t$ intervals of size $\Delta t$. 

\begin{alg}[DSMC method]~
\begin{enumerate}
\item Compute the initial sample particles $ \{w_i^0, i=1,\ldots,N\} $,\\ 
      by sampling them from the initial density $f_0(w)$ 
\item   \begin{tabbing}
\= {\fP for} \= $n=0$  {\fP to} $n_t-1$ \\
          \>          \> given $\{w_i^n,i=1,\ldots,N\}$\\
          \>       \>   \ind \= \cir \= set $N_c = \IR(N\Delta t/2)$ \\
          \>       \>        \> \cir \> select $N_c$ pairs $(i,j)$ uniformly among all possible pairs, \\
          \>       \>        \>      \> - \= perform the collision between $i$ and $j$, and compute \\
          \>       \>        \>      \>      \> $w_i'$ and $w_j'$ according to the  collision law \\
          \>       \>        \>      \> - set $w_i^{n+1} = w_i'$, $w_j^{n+1}=w_j'$ \\
          \>       \>        \> \cir \> set $w_i^{n+1}=w_i^n$ for all the particles that have not been selected\\
         \> {\fP end for}
\end{tabbing}
\end{enumerate}
\label{al:dsmc1}
\end{alg}

Here, by $\IR(x)$ we denote the stochastic rounding of a
positive real number $x$
\[
   \IR(x) = \left\{\begin{array}{lll}
                     {\intg{x}} + 1 & \mbox{with probability} & x-{\intg{x}} \\
                     {\intg{x}}     & \mbox{with probability} & 1-x+{\intg{x}}
                   \end{array}
            \right.
\]
where $\intg{x}$ denotes the integer part of $x$.

The kinetic distribution, as well as its moments, is then recovered from the empirical density function
\begin{equation}
f_N(t,w)=\frac1{N}\sum_{i=1}^N \delta(w-w_i(t)),
\label{eq:emp}
\end{equation}
where $\delta(\cdot)$ is the the Dirac delta and $\{w_i(t), i = 1, \dots,N\}$ are the samples at time $t\ge 0$. 
For any test function $\varphi$, if we denote by 
\[
(\varphi, f)(t) = \int_V \varphi(w)f(t,w)dw, 
\]
we have
\[
(\varphi,f_N) (t) = \dfrac{1}{N} \sum_{i = 1}^N \varphi(w_i(t)). 
\]
Hence, by assuming that $\int_V f(t,w)dw= 1$ we have that $(\varphi,f)=\mathbb E_V[\phi]$, where $\mathbb E_V[\cdot]$ is the expectation of the observable quantity $\varphi$ with respect to the density $f$. In the sequel, we will also make use of the notation $\mathbb E[\cdot]$ to denote the expectation in the random space of uncertainties. We will implicitly assume that for multidimensional variables, as in the case of statistical samples, the expected values are done with respect to each variable.

Thanks to the central limit theorem the following result holds \cite{caflisch1998monte}:
\begin{lemma}\label{lem:1}
The root mean square error is such that for each $t\ge 0$
\begin{equation*}
\label{eq:th_e1}
\mathbb E_{V}\left[\left( (\varphi,f) - (\varphi,f_N) \right)^2\right]^{1/2}= \dfrac{\sigma_\varphi}{N^{1/2}}, 
\end{equation*}
where $\sigma^2_\varphi = \VV_V[\varphi]$ with
\begin{equation*}
\label{eq:sigma_varphi}
\VV_V[\varphi](t) = \int_V( \varphi(w)- (\varphi,f)(t))^2 f(t,w)dw.
\end{equation*}
\end{lemma}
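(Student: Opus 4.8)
The plan is to recognize $(\varphi,f_N)(t)$ as the empirical mean of independent, identically distributed random variables and then to compute the mean square deviation from $(\varphi,f)(t)$ directly; the central limit theorem is invoked only to justify the underlying i.i.d.\ structure, while the stated equality is an exact second-moment computation.

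First I would set $\mu := (\varphi,f)(t)$ and introduce $X_i := \varphi(w_i(t))$ for $i=1,\dots,N$, so that by \eqref{eq:emp} we have $(\varphi,f_N)(t) = \frac1N\sum_{i=1}^N X_i$. Appealing to the convergence theory of the DSMC scheme recalled above (the propagation-of-chaos results of \cite{babovsky1989convergence,wagner1992convergence}), at each fixed time $t$ the samples $w_1(t),\dots,w_N(t)$ are treated as independent realizations drawn from the density $f(t,\cdot)$. Hence the $X_i$ are i.i.d.\ with $\mathbb E_V[X_i] = \int_V \varphi(w)f(t,w)\,dw = \mu$ and, by the very definition of $\VV_V[\varphi](t)$, with $\VV_V[X_i] = \sigma_\varphi^2$. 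In particular $\mathbb E_V[(\varphi,f_N)] = \mu = (\varphi,f)$, so the estimator $(\varphi,f_N)$ is unbiased and its mean square error coincides with its variance.

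Next I would expand the quantity of interest using bilinearity of the covariance:
\[
\mathbb E_V\!\left[\left((\varphi,f)-(\varphi,f_N)\right)^2\right]
= \VV_V\!\left[\frac1N\sum_{i=1}^N X_i\right]
= \frac1{N^2}\sum_{i=1}^N\sum_{j=1}^N \CC(X_i,X_j).
\]
By independence the off-diagonal covariances vanish, $\CC(X_i,X_j)=0$ for $i\neq j$, leaving only the $N$ diagonal terms,
\[
\frac1{N^2}\sum_{i=1}^N \VV_V[X_i] = \frac1{N^2}\,N\,\sigma_\varphi^2 = \frac{\sigma_\varphi^2}{N}.
\]
Taking square roots then yields the asserted identity $\mathbb E_V[((\varphi,f)-(\varphi,f_N))^2]^{1/2} = \sigma_\varphi / N^{1/2}$.

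The only delicate point, and the step I expect to require the cited machinery rather than elementary probability, is the independence of the samples $w_i(t)$. In a DSMC evolution the particles are coupled through the collision step of Algorithm~\ref{al:dsmc1}, so they are not literally independent at finite $N$; the equality is to be understood in the large-$N$ regime, where the rigorous convergence and propagation-of-chaos results of \cite{caflisch1998monte,babovsky1989convergence,wagner1992convergence} guarantee that the one-particle marginals converge to $f(t,\cdot)$ and the pairwise correlations decay. Granting this, the remaining computation is exact and elementary.
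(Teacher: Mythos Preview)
Your argument is correct and is precisely the standard variance computation for an i.i.d.\ sample mean that the paper is pointing to. Note that the paper does not actually supply a proof of Lemma~\ref{lem:1}: it simply states the result as a consequence of the central limit theorem with a reference to \cite{caflisch1998monte}, so there is no ``paper's own proof'' to compare against beyond that one-line attribution. Your write-up fills in exactly the details one would expect behind that citation, and your caveat about the DSMC particles not being literally independent at finite $N$---with the equality understood in the propagation-of-chaos regime---is a fair and honest reading of how the lemma is being used.
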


If we are interested in the evaluation of the error produced in the approximation of structured quantities like the reconstruction of the distribution function we can operate as follows: we introduce a uniform grid in $V\subseteq \mathbb R$  where each cell has width  $\Delta w>0$ and we denote by $S_{\Delta w}\ge 0$ a smoothing function such that 
\[
\Delta w \int_{V} S_{\Delta w}(w)dw = 1.
\]
Then, we consider the approximation of the empirical density \eqref{eq:emp} obtained by
\begin{equation}
\label{eq:f_RN}
f_{N,\Delta w}(t,w) = \dfrac{1}{N} \sum_{i = 1}^N S_{\Delta w}(w-w_i(t)). 
\end{equation}
In the simplest case, we have $S_{\Delta w}(w)=\chi(|w|\leq \Delta w/2)/\Delta w$, where $\chi(\cdot)$ is the indicator function, that corresponds to the standard histogram reconstruction.

The numerical error of the reconstructed DSMC solution \eqref{eq:f_RN}, can be estimated from
\[
\begin{split}
\left\| f(t,\cdot)-f_{N,\Delta w}(t,\cdot)\right\|_{L^p(V,L^2(V))} & \leq 
\| f(t,\cdot)-f_{\Delta w}(t,\cdot)\|_{L^p(V)}\\
&+\| f_{\Delta w}(t,\cdot)-f_{N,\Delta w}(t,\cdot)\|_{L^p(V,L^2(V))}
\end{split}
\]
where
\[
f_{\Delta w}(t,w)=\int_{V} S_{\Delta w}(w-w_*)f(t,w_*)\,dw_*
\]
and we defined
\begin{equation}
\|g\|_{L^p(V,L^2(V))}=\|\mathbb E_{V}\left[g^2\right]^{1/2}\|_{L^p(V)}.
\label{eq:norm1}
\end{equation}
Now, the second term can be evaluated from Lemma 1 taking $\varphi(\cdot)=S_{\Delta w}(w-\cdot)$, $w\in V$, and gives
\[
\| f_{\Delta w}(t,\cdot)-f_{N,\Delta w}(t,\cdot)\|_{L^p(V,L^2(V))} = \frac{\|\sigma_S\|_{L^p(V)}}{N^{1/2}}
\]
where 
\begin{equation}
\sigma^2_S(t,w)=\VV_V[S_{\Delta w}(w-\cdot)](t).
\label{eq:sigmas}
\end{equation} 

Finally, the first term is bounded by
\[
\| f(t,\cdot)-f_{\Delta w}(t,\cdot)\|_{L^p(V)} \leq C_f (\Delta w)^q,
\]
accordingly to the accuracy used in the reconstruction. For example, $q=1$ in the case of simple histogram reconstruction, since
\[
\int_{V} \left|f(t,w)-\frac1{\Delta w}\int_{w-\Delta w/2}^{w+\Delta w/2} f(t,w_*)\,dw_*\right|^p dw \leq C \Delta w^p,
\]
where $C$ is a bound for the derivative of $f(t,w)$.
Therefore, in the general case, we have the following result.
\begin{theorem} The error introduced by the reconstruction function \eqref{eq:f_RN} satisfies
\begin{equation*}
\left\| f(t,\cdot)-f_{N,\Delta w}(t,\cdot)\right\|_{L^p(V,L^2(V))} \leq \frac{\|\sigma_S\|_{L^p(V)}}{N^{1/2}} + C_f (\Delta w)^q,
\end{equation*}
where $C_f$ depends on the $q$ derivative in $w$ of $f(t,w)$ and $\sigma_{S}^2$ is defined in \eqref{eq:sigmas}.
\label{th:1}
\end{theorem}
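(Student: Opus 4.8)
The plan is to prove the bound exactly along the bias--variance splitting already anticipated above, isolating a purely deterministic reconstruction (smoothing) error from a purely stochastic Monte Carlo error and estimating each term separately. First I would write
\[
f(t,\cdot)-f_{N,\Delta w}(t,\cdot) = \big(f(t,\cdot)-f_{\Delta w}(t,\cdot)\big) + \big(f_{\Delta w}(t,\cdot)-f_{N,\Delta w}(t,\cdot)\big),
\]
with $f_{\Delta w}$ the mollification of $f$ by the kernel $S_{\Delta w}$, and then apply the triangle inequality for the mixed norm $\|\cdot\|_{L^p(V,L^2(V))}$ defined in \eqref{eq:norm1}. Since this norm is the composition $\|\,\mathbb E_V[\,\cdot^{\,2}\,]^{1/2}\,\|_{L^p(V)}$, the triangle inequality is obtained by chaining Minkowski's inequality twice: once in the inner $L^2$ layer over the sampling randomness (pointwise in $w$), and once in the outer $L^p$ layer in the variable $w$. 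This reduces the statement to bounding the two summands independently.

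For the deterministic term I would observe that $f-f_{\Delta w}$ carries no dependence on the random samples, so $\mathbb E_V[(f-f_{\Delta w})^2]^{1/2}=|f-f_{\Delta w}|$ and the mixed norm collapses to the ordinary $\|f-f_{\Delta w}\|_{L^p(V)}$. Recalling that $S_{\Delta w}$ has unit mass and support of width $O(\Delta w)$, a Taylor expansion of $f(t,\cdot)$ against the kernel gives the bound $C_f(\Delta w)^q$, with $C_f$ controlled by the $q$-th derivative of $f$ in $w$; the histogram case $q=1$ reproduces exactly the estimate displayed before the statement.

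For the stochastic term the key observation is that $f_{\Delta w}(t,w)$ is precisely the mean of the estimator $f_{N,\Delta w}(t,w)$ over the sampling. Taking $\varphi(\cdot)=S_{\Delta w}(w-\cdot)$ for each fixed $w$ one has $(\varphi,f_N)(t)=f_{N,\Delta w}(t,w)$ and $(\varphi,f)(t)=f_{\Delta w}(t,w)$, so the difference $f_{\Delta w}-f_{N,\Delta w}$ is a centered Monte Carlo error in exactly the form covered by Lemma~\ref{lem:1}. Applying that lemma pointwise in $w$ yields $\mathbb E_V[(f_{\Delta w}-f_{N,\Delta w})^2]^{1/2}=\sigma_S(t,w)/N^{1/2}$ with $\sigma_S^2$ as in \eqref{eq:sigmas}, and taking the $L^p(V)$ norm in $w$ produces the term $\|\sigma_S\|_{L^p(V)}/N^{1/2}$. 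Summing the two contributions gives the claim. I do not expect a genuine conceptual obstacle here; the only point requiring care is the bookkeeping of the two nested norms, namely applying Lemma~\ref{lem:1} pointwise in $w$ inside the $L^2$ expectation \emph{before} integrating in the $L^p$ variable, and confirming that the two Minkowski inequalities may indeed be chained for this mixed norm.
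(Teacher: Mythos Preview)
Your proposal is correct and follows essentially the same approach as the paper: the bias--variance splitting $f-f_{N,\Delta w}=(f-f_{\Delta w})+(f_{\Delta w}-f_{N,\Delta w})$, the collapse of the mixed norm on the deterministic term, and the pointwise application of Lemma~\ref{lem:1} with $\varphi(\cdot)=S_{\Delta w}(w-\cdot)$ for the stochastic term are exactly what the paper does in the discussion preceding the theorem. Your remark on chaining Minkowski's inequality for the mixed norm is slightly more explicit than the paper, which simply writes the triangle inequality without further comment.
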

 \begin{remark}
It is worth to remark that in the above consistency estimates we neglected the errors due to time discretization. By adapting the arguments in \cite{Babo} to the present case it is possibile to show that Nanbu's scheme converges in law almost surely to the solution of the time
discretized space homogeneous Boltzmann equation \eqref{BoltzmannModel} in the deterministic case. We leave a detailed analysis of the convergence properties in connection with the specific form of the uncertain binary interaction \eqref{micro} to further researches. 
\end{remark}

\subsection{Quantity of Interest}\label{sect:QoI}
In order to analyze the uncertainty of our model \eqref{SymBoltzmannModel} one may not be only interested in the expected value and variance of the distribution function $f$ but rather in any quantity of interest (QoI) which can be computed from the distribution function. For that purpose we introduce the operator $\g[f](t,w,z)$ which is, in the simplest setting the identity, i.e. $\g[f] = f$. More generally, the QoI defined by $\g[f]$ is a functional of $f$, like, for example, the moments of the kinetic solution. 

Let $\Psi(z)$ be the probability distribution function of the uncertainty $z \in \Omega$, hence we define the expected value of the operator $\g[f](t,w,z)$ by
\begin{equation*}
\mathbb{E}[\g[f]] (t,w)=\int_{\Omega} \g[f](t,w,z)\Psi(z)dz,
\end{equation*}
whose variance is defined as
\begin{equation*}
\VV[\g[f]] (t,w)=\int_{\Omega} (\g[f](t,w,z)-\mathbb{E}[\g[f]](t,w))^2 \Psi(z)dz. 
\end{equation*}

In socio-economic applications, together with the moments of the distribution $f$ that are linked with observable quantities, we can define other operators characterizing the main features of the emerging distribution. The first example is given by the tail distribution
\begin{align*}
1-F(t,w,z):= \int_w^{\infty} f(t,v,z)dv.
\end{align*}
Other examples of interest are the Lorenz curve and the Gini coefficient in the case of wealth exchange models \cite{during2018kinetic,gastwirth1972estimation}. These quantities are frequently used measures in order to study the inequality of the wealth distribution. The Gini index can be computed from the Lorenz curve at the stationary state

\begin{equation}
L(F(w,z)) = \frac{\int_0^w v f_\infty(v,z)dv}{\int_0^\infty v f_\infty(v,z)dv}, 
\end{equation}
where 
\[
F(w,z) = \int_0^w f_\infty(v,z)dv, 
\]
as follows
$$
G_1(z)= 1-2 \int\limits_0^1 L(x,z)dx. 
$$
In the present setting the QoI defined by $\g[f]$ needs to be approximated by the DSMC solver of the Boltzmann equation. Therefore, given uncertain random samples $\{w_1(t,z), w_2(t,z),...,w_N(t,z)\}$, the corresponding empirical density $f_N$ is defined by \eqref{eq:emp}.

In particular, the particle approximation of the Lorenz curve at time $t$ reads
$$
L(F(t,w,z))\approx L_N(F_N(t,w,z)):= \frac{\sum_{w_i(t,z) \leq  w} w_i(t,z)}{\sum_{k=1}^N w_k(t,z)},
$$
with
$$
{F}_N(t,w,z):= \textrm{card}\{w_i(t,z) \leq w,\ i=1,...,N  \}.
$$

\subsection{Monte Carlo sampling method}
We shortly recall the standard MC sampling method for uncertainty quantification of a kinetic equation of type \eqref{BoltzmannModel}. Assume $f(t,w,z)$, $w \in V$, solution of a PDE with uncertainties only in the initial distribution $f_0(w,z)$, $z \in \Omega \subseteq \mathbb R^{d_z}$.  
The MC sampling method can be regarded as the simplest method for UQ and can be formulated as follows
\begin{alg}[MC-DSMC algorithm]
The algorithm can be compactly summarized in the following steps.
\begin{enumerate}
\item \textbf{Sampling:}  Sample $M$ independent identically distributed (i.i.d.) initial distribution $f^{k,0}=f_0(t,w,z_k),\ k=1,...,M$ from the random initial data $f_0$.
\item \textbf{Solving:} For each realization of $f^{k,0}$ the underlying kinetic equation \eqref{BoltzmannModel} is solved numerically by a DSMC solver for the kinetic equation.
We denote the solution at time $t^n$ by $f^{k,n}_N,\ k=1,...,M$ where $N$ is the sample size of the DSMC solver for the kinetic equation. 
\item \textbf{Reconstruction:} Estimate a statistical moment of the quantity of interest $\g[f^n]$ of the random solution field, e.g. for the expected value with the sample mean of the approximated solution $\g[f_N^{n}]$
\begin{align*}
\mathbb{E}[\g[f^n]] \approx E_M[\g[f_N^{n}]]:= \frac{1}{M} \sum\limits_{k=1}^M \g[f_N^{k,n}].
\end{align*}
\end{enumerate}
\end{alg}
The previous algorithm is straightforward to implement for the kinetic equation \eqref{BoltzmannModel}. Thus, the MC-DSMC method is non-intrusive and easily parallelizable since the ensemble averages are only computed as post-processing. 

The empirical kinetic distribution in presence of uncertainty is given by 
\[
f_N(t,w,z) = \dfrac{1}{N} \sum_{i=1}^N \delta(w-w_i(t,z)), 
\]
being $\{w_i(t,z), i = 1,\dots,N\}$ the samples of the particles at time $t\ge 0$ such that $w_i\in L^2(\Omega)$. For example, if $q[f]=(\varphi,f)$ we have
\[
q[f_N](z,t)=(\varphi,f_N)(z,t) = \dfrac{1}{N} \sum_{i=1}^N \varphi(w_i(z,t)),
\]
and the following result holds
\begin{lemma}
The root mean square error of the MC-DSMC method satisfies 
\[
\mathbb E\left[\mathbb E_{V}[ \left( \mathbb E[(\varphi,f)]- E_M[(\varphi,f_N)] \right)^2 ]\right]^{1/2}\le \frac{\nu_{(\varphi,f)}} {M^{1/2}} + \frac{\sigma_{\varphi,M}} {N^{1/2}}, 
\]
where $\nu^2_{(\varphi,f)} = \VV[(\varphi,f)]$
and $\sigma^2_{\varphi,M} = E_M[\sigma^2_{\varphi}]$ with $\sigma_\varphi^2=\VV_V[\varphi]$.
\label{lem:2}
\end{lemma}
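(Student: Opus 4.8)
The plan is to split the total error into a contribution from the Monte Carlo sampling in the random space and a contribution from the DSMC approximation in the phase space, and to estimate each separately. Observe first that the functional $\|X\| := \mathbb E[\mathbb E_V[X^2]]^{1/2}$ is the $L^2$-norm over the product of the probability space of the random-space samples $z_1,\dots,z_M$ and that of the particle ensembles, so Minkowski's inequality applies. Inserting the intermediate estimator $E_M[(\varphi,f)]$, obtained by applying the sample mean to the \emph{exact} kinetic solution, I would write
\begin{equation*}
\mathbb E[(\varphi,f)] - E_M[(\varphi,f_N)] = \big(\mathbb E[(\varphi,f)] - E_M[(\varphi,f)]\big) + \big(E_M[(\varphi,f)] - E_M[(\varphi,f_N)]\big),
\end{equation*}
and apply the triangle inequality in $\|\cdot\|$ to reduce the statement to bounding the two terms on the right.

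For the first term, note that it depends only on $z_1,\dots,z_M$, so $\mathbb E_V$ acts trivially. Since the samples are i.i.d. and $E_M$ is an unbiased estimator of $\mathbb E[(\varphi,f)]$, its mean-square equals the variance of a sample mean of $M$ i.i.d. copies, namely $\VV[(\varphi,f)]/M = \nu_{(\varphi,f)}^2/M$; taking the square root gives the first contribution $\nu_{(\varphi,f)}/M^{1/2}$.

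For the second term, write $E_M[(\varphi,f)] - E_M[(\varphi,f_N)] = \frac1M\sum_{k=1}^M \delta_k$ with $\delta_k := (\varphi,f)(t,z_k) - (\varphi,f_N)(t,z_k)$. Conditioning on the realizations $z_1,\dots,z_M$ and using the power-mean (Cauchy--Schwarz) inequality $\big(\frac1M\sum_k\delta_k\big)^2\le \frac1M\sum_k\delta_k^2$, I would then invoke Lemma \ref{lem:1} for each fixed $z_k$, which gives $\mathbb E_V[\delta_k^2\mid z_k] = \sigma_\varphi^2(t,z_k)/N$ with $\sigma_\varphi^2 = \VV_V[\varphi]$. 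Summing, the conditional mean-square of the DSMC term is bounded by $\frac1N\,E_M[\sigma_\varphi^2] = \sigma_{\varphi,M}^2/N$, so its root-mean-square contributes $\sigma_{\varphi,M}/N^{1/2}$; combining with the first term through the triangle inequality yields the claim.

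The delicate point is the simultaneous handling of the two independent sources of randomness and, in particular, the choice of how to estimate the DSMC term. The power-mean inequality deliberately discards the cancellation between the $M$ independent DSMC errors: had one instead used their independence together with the (approximate) unbiasedness of the DSMC estimator, the cross terms would vanish under $\mathbb E_V$ and the second contribution would sharpen to $\big(\mathbb E[\sigma_\varphi^2]\big)^{1/2}/(MN)^{1/2}$. The stated bound keeps the safer, data-dependent constant $\sigma_{\varphi,M}^2 = E_M[\sigma_\varphi^2]$ arising from the conditional estimate; one should note that this quantity is itself random, with $\mathbb E[\sigma_{\varphi,M}^2] = \mathbb E[\sigma_\varphi^2]$, so that after taking the outer expectation it is consistent with a deterministic $1/N^{1/2}$ control of the phase-space error.
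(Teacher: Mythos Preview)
Your proposal is correct and follows essentially the same route as the paper: the same insertion of the intermediate estimator $E_M[(\varphi,f)]$, the same triangle inequality in the combined $L^2$ norm, the standard Monte Carlo variance bound for the first term, and the same Cauchy--Schwarz step $\big(\tfrac1M\sum_k\delta_k\big)^2\le\tfrac1M\sum_k\delta_k^2$ followed by Lemma~\ref{lem:1} for the second. Your closing remarks on the randomness of $\sigma_{\varphi,M}^2$ and the potential sharpening via independence of the DSMC errors are additional observations not present in the paper.
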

\begin{proof}
The above estimate follows from 
\[
\begin{split}
\mathbb E\left[\mathbb E_{V}[ \left( \mathbb E[(\varphi,f)]- E_M[(\varphi,f_N)] \right)^2 ]\right]^{1/2}   \le & \,\mathbb E\Big[(\mathbb E[(\varphi,f)]  - E_M[(\varphi,f)])^2 \Big]^{1/2}  \\
 & + \mathbb E_{V}\Big[ \left( E_M[(\varphi,f)]  - E_M[(\varphi,f_N)] \right)^2\Big]^{1/2}  \\
  \leq   & \,\dfrac{\nu_{(\phi,f)}}{M^{1/2}}+ \dfrac{\sigma_{\varphi,M}}{N^{1/2}}, 
\end{split}
\]
where we used the fact that by Cauchy-Schwartz and Lemma \ref{lem:1} we have
\[
\begin{split}
\mathbb E_{V}\Big[ \left( E_M[(\varphi,f)]  - E_M[(\varphi,f_N)] \right)^2\Big]& \leq  \frac1{M}\sum_{k=1}^M \mathbb E_V[((\varphi,f)(z_k)-(\varphi,f_N)(z_k))^2]\\
& = \frac1{N}\left(\frac1{M}\sum_{k=1}^M \sigma^2_{\varphi}(z_k)\right).
\end{split}
\]
\end{proof}

Let us consider the reconstructed distribution with uncertainty
\begin{equation}
\label{eq:f_RNz}
f_{N,\Delta w}(t,w,z) = \dfrac{1}{N} \sum_{i = 1}^N S_{\Delta w}(w-w_i(t,z)), 
\end{equation}
and let us focus on the accuracy of the expectation of the solution $\mathbb E[f]$.
We can estimate

\[
\begin{split}
\left\| \mathbb E[f](t,\cdot)-E_M[f_{N,\Delta w}](t,\cdot)\right\|_{L^p(V,L^2(\Omega,L^2(V)))} & \leq 
\left\| \mathbb E[f](t,\cdot)-\mathbb E[f_{\Delta w}](t,\cdot)\right\|_{L^p(V)}\\
&+
\left\| \mathbb E[f_{\Delta w}](t,\cdot)-E_M[f_{N,\Delta w}](t,\cdot)\right\|_{L^p(V,L^2(\Omega))},
\end{split}
\]
where we defined 
\begin{equation}
\|g \|_{L^p(V,L^2(\Omega,L^2(V)))} = \| \mathbb E[\mathbb E_V[g^2]]^{1/2} \|_{L^p(V)}.
\label{eq:norm2}
\end{equation}
We can bound the first term as in the case without uncertainty
\[
\left\| \mathbb E[f](t,\cdot)-\mathbb E[f_{\Delta w}](t,\cdot)\right\|_{L^p(V)} \leq C_{\mathbb E[f]} (\Delta w)^q
\]
whereas the second term is bounded using Lemma \ref{lem:2} with $\phi(\cdot)=S_{\Delta w}(w-\cdot)$ to get
\[
\left\| \mathbb E[f_{\Delta w}](t,\cdot)-E_M[f_{N,\Delta w}](t,\cdot)\right\|_{L^p(V,L^2(\Omega,L^2(V))} \leq 
\frac{\|\nu_{(S,f)}\|_{L^p(V)}}{M^{1/2}}+\frac{\|\sigma_{S,M}\|_{L^p(V)}}{N^{1/2}},
\]
where
\begin{equation}
\nu^2_{(S,f)} = \VV[(S_{\Delta w}(w-\cdot),f)].
\label{eq:nus}
\end{equation}
As a consequence we have shown that
\begin{theorem}
The error introduced by the reconstruction function \eqref{eq:f_RNz} in the MC-DSMC method satisfies
\begin{equation*}
\label{eq:estim_2}
\begin{split}
\left\| \mathbb E[f](t,\cdot)-E_M[f_{N,\Delta w}](t,\cdot)\right\|_{L^p(V,L^2(\Omega,L^2(V)))}&\\
 \leq \frac{\|\nu_{(S,f)}\|_{L^p(V)}}{M^{1/2}}&+\frac{\|\sigma_{S,M}\|_{L^p(V)}}{N^{1/2}} + C_{\mathbb E[f]} (\Delta w)^q
\end{split}
\end{equation*}
where $\nu_{(S,f)}^2$ is defined in \eqref{eq:nus} and $\sigma^2_{S,M} = E_M[\sigma^2_{S}]$ with  $\sigma_S^2$ defined in \eqref{eq:sigmas}.
\label{th:2}
\end{theorem}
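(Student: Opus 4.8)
The plan is to reproduce the argument already used for Theorem~\ref{th:1} in the deterministic setting, now carrying along the extra averages over the random space. First I would insert the intermediate deterministic quantity $\mathbb E[f_{\Delta w}]$, the uncertainty average of the exactly smoothed solution, which depends on neither the DSMC particle samples nor the Monte Carlo samples $z_k$, and apply the triangle inequality in the composite norm \eqref{eq:norm2}. This splits the total error into a reconstruction (smoothing) bias $\|\mathbb E[f]-\mathbb E[f_{\Delta w}]\|$ and a statistical part $\|\mathbb E[f_{\Delta w}]-E_M[f_{N,\Delta w}]\|$. Since $\mathbb E[f]-\mathbb E[f_{\Delta w}]$ is deterministic in $w$, both inner averages $\mathbb E$ and $\mathbb E_V$ collapse on the first term, reducing it to a plain $L^p(V)$ norm.

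For the reconstruction bias I would simply repeat the histogram/Taylor estimate carried out before Theorem~\ref{th:1}, applied to $\mathbb E[f]$ in place of $f$: using the normalization $\Delta w\int_V S_{\Delta w}(w)\,dw=1$ and the regularity of $\mathbb E[f]$ in $w$, one obtains $\|\mathbb E[f](t,\cdot)-\mathbb E[f_{\Delta w}](t,\cdot)\|_{L^p(V)}\le C_{\mathbb E[f]}(\Delta w)^q$, with $C_{\mathbb E[f]}$ controlled by the $q$-th derivative in $w$ of $\mathbb E[f]$. This step is routine and only requires interchanging the finite-measure integral in $z$ with the spatial estimate.

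The heart of the argument is the statistical term. For each fixed grid point $w\in V$ I would take the scalar observable $\varphi(\cdot)=S_{\Delta w}(w-\cdot)$, so that $(\varphi,f)(t,z)=f_{\Delta w}(t,w,z)$ and $(\varphi,f_N)(t,z)=f_{N,\Delta w}(t,w,z)$, and apply Lemma~\ref{lem:2} at that point. This bounds the pointwise root mean square error $\mathbb E[\mathbb E_V[(\mathbb E[f_{\Delta w}](t,w)-E_M[f_{N,\Delta w}](t,w))^2]]^{1/2}$ by $\nu_{(S,f)}(t,w)/M^{1/2}+\sigma_{S,M}(t,w)/N^{1/2}$, with $\nu_{(S,f)}^2$ and $\sigma_S^2$ as in \eqref{eq:nus} and \eqref{eq:sigmas}. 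I would then regard $\nu_{(S,f)}$ and $\sigma_{S,M}$ as functions of $w$, take the $L^p(V)$ norm, and use Minkowski's inequality in $L^p(V)$ to separate the two contributions into $\|\nu_{(S,f)}\|_{L^p(V)}/M^{1/2}$ and $\|\sigma_{S,M}\|_{L^p(V)}/N^{1/2}$. The main obstacle is precisely this bookkeeping of the three nested averages --- the DSMC sampling $\mathbb E_V$, the Monte Carlo sampling of the $z_k$, and the true uncertainty average $\mathbb E$ --- and in checking that the pointwise-in-$w$ estimate of Lemma~\ref{lem:2} survives integration against $w$; the only nontrivial commutation, namely replacing the DSMC variance by its Monte Carlo average $\sigma^2_{S,M}=E_M[\sigma^2_S]$ via the Cauchy--Schwarz/i.i.d.\ step inside Lemma~\ref{lem:2}, passes through the outer $L^p(V)$ norm by Minkowski for every $p\ge 1$. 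Collecting the two bounds then gives the claimed estimate.
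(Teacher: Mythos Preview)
Your proposal is correct and follows essentially the same route as the paper: insert the intermediate quantity $\mathbb E[f_{\Delta w}]$, bound the deterministic smoothing bias by $C_{\mathbb E[f]}(\Delta w)^q$ as in Theorem~\ref{th:1}, and control the statistical term by applying Lemma~\ref{lem:2} pointwise with $\varphi(\cdot)=S_{\Delta w}(w-\cdot)$ followed by the $L^p(V)$ norm. Your additional remarks on the nested averages and the Minkowski step are more explicit than the paper's presentation but add nothing new.
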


\begin{remark}
In the above consistency estimates we used the error norm defined in \eqref{eq:norm2}. A frequently used norm in uncertainty quantification based on Monte Carlo strategies is the expectation of the error defined as
\begin{equation}
\|g \|_{L^2(\Omega,L^p(V,L^2(V)))} = \mathbb E\left[\|\mathbb E_V[g^2]^{1/2}\|^2_{L^p(V)}\right]^{1/2}.
\label{eq:norm2b}
\end{equation}
The two norm for $p\neq 2$ differs and are related by Jensen's inequality
\[
\|g \|_{L^p(V,L^2(\Omega,L^2(V)))}\leq \|g \|_{L^2(\Omega,L^p(V,L^2(V)))}.
\]
We refer to \cite{dimarco2018multi} for a more detailed discussion.
\end{remark}

\section{Mean Field Control Variate DSMC methods}\label{MFCV}
In order to improve the accuracy of standard MC sampling methods, we introduce a novel class of mean field control variate  methods \cite{dimarco2018multi}. The key idea is to take advantage of the reduced cost of the mean field model which approximates the asymptotic behavior of the original Boltzmann model. This enables us to reduce the variance of the MC estimate. More precisely we consider two control variates approaches obtained by the the mean field approximation: the direct numerical solution of the mean field model and its corresponding steady state. 

The consistency of the mean field approximation  \eqref{FP} with the integro-differential equation \eqref{ScaledBol} has been previously discussed in Section \ref{model}. 
We denote the solution of the time scaled Boltzmann equation  \eqref{ScaledBol} by $f_{\epsilon}=f_{\epsilon}(t,w,z)$ and the solution of the corresponding mean field approximation \eqref{FP} by $\tilde{f}=\tilde{f}(t,w,z)$. The grazing collision regime of the time scaled kinetic equation \eqref{ScaledBol} corresponds to a small scaling parameter $\epsilon$.
Thus, up to an extraction of a subsequence, we indicate 
$$
\lim\limits_{\epsilon\to 0} f_\epsilon(t,w,z) = \h(t,w,z),
$$
being $f_\epsilon$ solution of \eqref{ScaledBol}. Therefore, also the equilibrium distributions are compatible, i.e.
$$
\lim\limits_{t\to\infty} \lim\limits_{\epsilon\to 0} f_\epsilon(t,w,z) = \h_{\infty}(w,z). 
$$
As we observed, the steady state $\h_{\infty}(w,z)$ of the Fokker-Planck model is analytically computable in several cases, whereas the steady state of the Boltzmann model \eqref{ScaledBol} is unknown.

The parameter dependent control variate method with $\lambda\in\R$ can be formulated by introducing the quantity 
 \begin{align}
  \g^{\lambda}[f_\epsilon]= \g[f_\epsilon]- \lambda (\g[\h]-\mathbb{E}[\g[\h]]).\label{CV}
\end{align}     
It is straightforward to observe that the expected value satisfies
$$
\mathbb{E}[\g^{\lambda}[f_\epsilon]]= \mathbb{E}[\g[\h]].
$$     
We can state the following
\begin{theorem}
The optimal value $\lambda^*$ which minimizes the variance of \eqref{CV} is given by
\begin{align}
\lambda^*:= \frac{\CC[\g[f_\epsilon],\g[\h]]}{\VV[\g[{\h}]]},
\label{eq:olambda}
\end{align}
where $\CC[\cdot,\cdot]$ denotes the covariance.
The corresponding variance of $q^{\lambda^*}[{f_\epsilon}]$ is then 
\begin{equation}\label{VarClev}
\VV[q^{\lambda^*}[f_\epsilon]]= \left(1-\rho^2_{\g[f_\epsilon], \g[\h]}\right)\ \VV[\g[f_\epsilon]],
\end{equation}
where
$$
\rho_{\g[f_\epsilon], \g[\h]}:=\frac{\CC[\g[f_\epsilon],\g[\h]]}{\sqrt{\VV[\g[f_\epsilon]]\  \VV[\g[\h]]}} \in (-1,1), 
$$
is the Pearson's correlation coefficient between $\g[f_\epsilon]$ and $\g[\h]$. 
In particular, we have 
$$
\lim\limits_{\epsilon \to 0}\frac{\CC[\g[f_\epsilon],\g[\h]]}{\VV[\g[\h]]}= 1,\quad \lim\limits_{\epsilon \to 0}\VV[\g^{\lambda^*}[f_\epsilon]] = 0.
$$
\label{th:lambda}
\end{theorem}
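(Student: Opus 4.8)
The plan is to treat \eqref{CV} as a one-parameter family in $\lambda$, minimize its variance by elementary calculus, and then read off the asymptotic behaviour directly from the grazing-limit consistency already recalled in Section \ref{model}. First I would note that $\mathbb E[\g[\h]]$ is a deterministic constant in the random space, so shifting by it leaves the variance unchanged; hence
\[
\VV[\g^{\lambda}[f_\epsilon]]
= \VV[\g[f_\epsilon] - \lambda\,\g[\h]]
= \VV[\g[f_\epsilon]] - 2\lambda\,\CC[\g[f_\epsilon],\g[\h]] + \lambda^2\,\VV[\g[\h]].
\]
This is a convex quadratic in $\lambda$, since the leading coefficient $\VV[\g[\h]]$ is nonnegative, so its unique minimizer is obtained by setting the derivative in $\lambda$ to zero, which gives exactly \eqref{eq:olambda}.

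Substituting $\lambda^*$ back into the quadratic yields
\[
\VV[\g^{\lambda^*}[f_\epsilon]]
= \VV[\g[f_\epsilon]] - \frac{\CC[\g[f_\epsilon],\g[\h]]^2}{\VV[\g[\h]]},
\]
and factoring out $\VV[\g[f_\epsilon]]$ together with the definition of the correlation coefficient $\rho_{\g[f_\epsilon],\g[\h]}$ reproduces \eqref{VarClev}. These two steps are entirely routine and amount to the classical control-variate computation; the only thing to check is that $\VV[\g[\h]]>0$ so that $\lambda^*$ is well defined.

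The substantive part is the limit $\epsilon\to 0$. From the grazing-collision consistency of Section \ref{model}, $f_\epsilon\to\h$ up to subsequences as $\epsilon\to 0$, and I would upgrade this to $\g[f_\epsilon]\to\g[\h]$ in $L^2(\Omega)$ under suitable continuity of the functional $\g$ and uniform integrability in $z$. Granting this, covariance is continuous with respect to $L^2(\Omega)$ convergence (by Cauchy--Schwarz applied to the product terms), so
\[
\lim_{\epsilon\to 0}\CC[\g[f_\epsilon],\g[\h]] = \CC[\g[\h],\g[\h]] = \VV[\g[\h]],
\]
which gives the first limit in the statement. Simultaneously $\VV[\g[f_\epsilon]]\to\VV[\g[\h]]$ and $\rho_{\g[f_\epsilon],\g[\h]}\to 1$, so $\big(1-\rho^2_{\g[f_\epsilon],\g[\h]}\big)\VV[\g[f_\epsilon]]\to 0$, yielding $\VV[\g^{\lambda^*}[f_\epsilon]]\to 0$ through \eqref{VarClev}.

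The main obstacle is precisely this interchange of the grazing limit with the statistical moments in the random space. The consistency result only provides weak, subsequential convergence of $f_\epsilon$ in the phase variable $w$, so one must justify that it lifts to $L^2(\Omega)$ convergence of $\g[f_\epsilon]$ strongly enough for $\CC$ and $\VV$ to pass to the limit. For bounded Lipschitz quantities of interest with moments uniformly bounded in $z$ this follows from dominated convergence, but for unbounded functionals, such as higher moments on $V=\R^+$ in the wealth-exchange model, it would require an a priori tail or moment bound uniform in $\epsilon$, which is the delicate point I expect the argument to hinge on.
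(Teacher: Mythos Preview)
Your proposal is correct and follows the same approach as the paper: expand the variance as a quadratic in $\lambda$, differentiate to obtain $\lambda^*$, substitute back to get \eqref{VarClev}, and then invoke $f_\epsilon\to\h$ for the limits. In fact you are more careful than the paper, which simply writes that the limit part ``follows immediately since $\lim_{\epsilon\to 0} f_\epsilon=\h$''; your discussion of whether this lifts to $L^2(\Omega)$ convergence of $\g[f_\epsilon]$ and the need for uniform-in-$\epsilon$ moment bounds identifies a genuine technical point that the paper leaves implicit.
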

\begin{proof}
We aim to choose the coefficient $\lambda$ such that the variance $\VV[\g^{\lambda}[f_\epsilon]]$ of the control variate formulation is minimized. We have
\begin{align}\label{VarRed}
\VV[\g^{\lambda}[f_\epsilon]]=\VV[\g[f_\epsilon]]-2\ \lambda\ \CC[\g[f_\epsilon], \g[\h]]+ \lambda^2\ \VV[\g[{\h}]].
\end{align}
Then we can differentiate \eqref{VarRed} with respect to $\lambda$ 
to obtain that \eqref{eq:olambda} minimizes the variance. The second part of the theorem follows immediately since $\lim\limits_{\epsilon \to 0} f_\epsilon = \h$ holds in the quasi-invariant regime for $f_\epsilon$ solution to \eqref{ScaledBol}.
\end{proof}
Equation \eqref{VarClev} reveals that the mean field control variate approach may lead to a strong variance reduction provided that the correlation coefficient is close to one. 
                                                                                        
In the simplest setting the control variate formulation in \eqref{CV} can be modified using the steady state $\h_{\infty}(w,z)$ of the mean-field model \eqref{FP} 
 \begin{equation}
  \g^{\lambda}[f_\epsilon]= \g[f_\epsilon]- \lambda (\g[\h_{\infty}]-\mathbb{E}[\g[\h_{\infty}]]).\label{CVSteady}
\end{equation}  
For the mean field control variate steady state \eqref{CVSteady} similar results
holds in the large time limit
$$
\lim\limits_{t \to \infty}\lim\limits_{\epsilon \to 0} \frac{\CC[\g[{f}],\g[{\h}_{\infty}]]}{\VV[\g[{\h}_{\infty}]]}= 1,\quad \lim\limits_{t \to \infty}\lim\limits_{\epsilon \to 0}\VV[\g^{\lambda^*}[{f}]] = 0,
$$
where now
\[
\lambda^*:= \frac{\CC[\g[f_\epsilon],\g[\h^\infty]]}{\VV[\g[{\h^\infty}]]}.
\] 
 In practice it is only possible to compute the optimal $\lambda^*$ numerically. Furthermore, it is of paramount importance to be able to compute $\mathbb{E}[\g[{\h}]]$ or $\mathbb{E}[\g[{\h^\infty}]]$ exactly or with very small error in order to keep advantage of the control variate approach.

\subsection{A Mean-Field control variate estimator}
In a MC setting, we use $M$ realizations of our random variable $z$ to define the Mean Field Control Variate (MFCV) estimator
\begin{equation}
 \mathbb{E}[\g^{\lambda^*}[f_\epsilon]]\approx E_M[\g^{\lambda^*}[f_\epsilon]]= E_M[ \g[f_\epsilon]]-  \lambda^*_N (E_M[\g[\h]]-\mathbb E[\g[\h]]),
\end{equation}
where 
\begin{equation}
\lambda^*_N = \frac{Cov_M[\g[f_\epsilon],\g[\h]]}{Var_M[\g[\h]]},
\label{eq:lambdaN}
\end{equation}
and $\mathbb E[\g[\h]]$ denotes the exact value of the expectation of the considered QoI or its numerical approximation with negligible error. Furthermore, we introduced the following notations
\begin{align*}
& E_M[ \g[f_\epsilon]]:= \frac{1}{M}\sum_{k=1}^M \g[f_\epsilon^k],\quad E_M[ \g[\h]]:= \frac{1}{M}\sum_{k=1}^M \g[\h^k]\\
&Var_M[\g[\h]]:= \frac{1}{M-1}\sum\limits_{k=1}^M (\g[\h^k]- \mathbb{E}[\g[\h]])^2,\\
& Cov_M[\g[f_\epsilon], \g[\h]]:= \frac{1}{M-1}\sum\limits_{k=1}^M (\g[f^k_{\epsilon}]- E_M[\g[f_\epsilon]])\ (\g[\h^k]- \mathbb{E}[\g[\h]]), 
\end{align*}
with $f_\epsilon^k$ and $\h^k$ solutions of the Boltzmann-type model and of the mean-field model, respectively, relative to $k$th realization of the random variable $z$. 

The  MFCV algorithm based on a DSMC algorithm reads:
\begin{alg}[MFCV-DSMC algorithm]
The main steps of the MFCV-DSMC method for uncertainty in the initial data can be summarized as follows:
\begin{enumerate}
 \item \textbf{Sampling:}  Sample $M$ independent identically distributed (i.i.d.) samples of the initial distribution ${f}^{k,0} = f_0(w,z^k)$, $k=1,...,M$ from the random initial data ${f}_0(w,z)$.
 \item \textbf{Solving:} For each realization ${f}^{0,k}$, $k=1,...,M$
 \begin{itemize}
 \item Compute the control variate $\h^{k,n}$, $k=1,...,M$ at time $t^n$ solving with a suitable deterministic method the mean field model \eqref{FP} (or using the steady state $\h_{\infty}$ ) and compute $\mathbb E[\g[\h^n]]$ (or $\mathbb E[\g[\h_\infty]]$) with negligible error. 
 \item Solve the kinetic equation \eqref{ScaledBol} by a DSMC solver with sample size $N$. We denote the solution at time $t^n$ by ${{f}}^{k,n}_{\epsilon,N},\ k=1,...,M$. 
 \end{itemize}
 \item \textbf{Estimating:}
  \begin{itemize}
 \item Estimate the optimal value of $\lambda^{*}$ at time $t^n$ by \eqref{eq:lambdaN} and denote it as $ \lambda^{*,n}_M$.
 \item Compute the the expectation of any quantity of interest $\g[f_{\epsilon,N}]$ of the random solution field with the mean-field control estimator
 $$
 E^{\lambda_*}_M[g[f_{\epsilon,N}]]= E_M[ \g[f^{n}_{\epsilon,N}]]-  \lambda^{*,n}_M (E_M[\g[\h^{n}]]-\mathbb E[\g[\h^n]]).
 $$
 \end{itemize}
\end{enumerate}
\end{alg}

The error bound of the MFCV-DSMC method may improve significantly  in comparison to the standard MC method thanks to the relation  \eqref{VarClev} once 
 $\rho_{\g[f_\epsilon], \g[\h]}\approx 1$ holds. Concerning the evaluation of moments, by ignoring the error term due to the approximation of $\lambda_*$, we have the following
 \begin{lemma}
The root mean square error of the MFCV-DSMC method satisfies 
\[
\mathbb E\left[\mathbb E_{V}\left[ \left( \mathbb E[(\varphi,f_\epsilon)]- E^{\lambda_*}_M[(\varphi,f_{\epsilon,N})] \right)^2 \right]\right]^{1/2}\le \left(1-\rho^2_{(\varphi,f_\epsilon), (\varphi,\h)}\right)^{1/2}\frac{\nu_{(\phi,f_\epsilon)}} {M^{1/2}} + \frac{\sigma_{\varphi,M}} {N^{1/2}}, 
\]
where $\nu^2_{(\varphi,f_\epsilon)} = \VV[(\varphi,f_\epsilon)]$
and $\sigma^2_{\varphi,M} = E_M[\sigma^2_{\varphi}]$ with $\sigma_\varphi^2=\VV_V[\varphi]$.
\label{lem:3}
\end{lemma}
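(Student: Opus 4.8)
The plan is to mirror the proof of Lemma \ref{lem:2}, splitting the total error into a random-space (sampling) contribution and a DSMC (physical-space) contribution, the crucial difference being that the sampling contribution is now governed by the reduced variance \eqref{VarClev} of Theorem \ref{th:lambda}. To this end I would introduce the idealized control variate estimator applied to the \emph{exact} Boltzmann solution,
\[
E^{\lambda_*}_M[(\varphi,f_\epsilon)] := E_M[(\varphi,f_\epsilon)] - \lambda^*\left(E_M[(\varphi,\h)]-\mathbb E[(\varphi,\h)]\right),
\]
which differs from $E^{\lambda_*}_M[(\varphi,f_{\epsilon,N})]$ only in that the Boltzmann moment is evaluated exactly rather than through the DSMC empirical density, and where $\lambda^*$ is the exact optimal value \eqref{eq:olambda}, consistently with the hypothesis of ignoring the error in estimating $\lambda_*$. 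Writing the error as the telescoping sum
\[
\mathbb E[(\varphi,f_\epsilon)] - E^{\lambda_*}_M[(\varphi,f_{\epsilon,N})] = \underbrace{\left(\mathbb E[(\varphi,f_\epsilon)] - E^{\lambda_*}_M[(\varphi,f_\epsilon)]\right)}_{\text{(I)}} + \underbrace{\left(E^{\lambda_*}_M[(\varphi,f_\epsilon)] - E^{\lambda_*}_M[(\varphi,f_{\epsilon,N})]\right)}_{\text{(II)}}
\]
and applying the triangle inequality in the norm $\mathbb E[\mathbb E_V[\,\cdot\,^2]]^{1/2}$ reduces the estimate to bounding (I) and (II) separately.

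For term (I), the key observation is that $E^{\lambda_*}_M[(\varphi,f_\epsilon)]$ is the empirical mean over $M$ i.i.d.\ realizations of the control-variate variable $\g^{\lambda^*}[f_\epsilon]=(\varphi,f_\epsilon)-\lambda^*((\varphi,\h)-\mathbb E[(\varphi,\h)])$, which is unbiased for $\mathbb E[(\varphi,f_\epsilon)]$ (the added term has zero mean) and, by Theorem \ref{th:lambda}, has variance $\VV[\g^{\lambda^*}[f_\epsilon]]=(1-\rho^2_{(\varphi,f_\epsilon),(\varphi,\h)})\,\nu^2_{(\varphi,f_\epsilon)}$. Since term (I) carries no DSMC randomness, $\mathbb E_V$ acts trivially on it and the variance of the sample mean gives
\[
\mathbb E\left[\left(\mathbb E[(\varphi,f_\epsilon)] - E^{\lambda_*}_M[(\varphi,f_\epsilon)]\right)^2\right]^{1/2} = \left(1-\rho^2_{(\varphi,f_\epsilon),(\varphi,\h)}\right)^{1/2}\frac{\nu_{(\varphi,f_\epsilon)}}{M^{1/2}},
\]
which is exactly the first summand in the claimed bound.

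For term (II), the decisive simplification is that the control variate correction is computed by a deterministic solver of the mean-field model, so $(\varphi,\h)$ is identical in $E^{\lambda_*}_M[(\varphi,f_\epsilon)]$ and $E^{\lambda_*}_M[(\varphi,f_{\epsilon,N})]$; the $\lambda^*$-terms therefore cancel, leaving
\[
E^{\lambda_*}_M[(\varphi,f_\epsilon)] - E^{\lambda_*}_M[(\varphi,f_{\epsilon,N})] = E_M[(\varphi,f_\epsilon)] - E_M[(\varphi,f_{\epsilon,N})].
\]
This is precisely the DSMC term already controlled in the proof of Lemma \ref{lem:2}: by Cauchy--Schwarz over the $M$ samples and Lemma \ref{lem:1} applied to each realization one obtains the bound $\sigma_{\varphi,M}/N^{1/2}$ with $\sigma^2_{\varphi,M}=E_M[\sigma^2_\varphi]$. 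Summing the two contributions yields the stated inequality.

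I expect the main difficulty to lie not in the calculation but in correctly isolating the two error sources: one must recognise that the deterministic evaluation of the control variate makes its contribution cancel in (II), so the $N^{-1/2}$ rate is untouched by the variance reduction, while the $(1-\rho^2)^{1/2}$ gain enters only through the sampling term via \eqref{VarClev}. A secondary point requiring care is the unbiasedness used in (I), which rests on $\mathbb E[\g^{\lambda}[f_\epsilon]]=\mathbb E[(\varphi,f_\epsilon)]$, and the fact that one works throughout under the stated idealization that $\lambda^*$ is the exact optimal coefficient rather than its data-dependent estimate $\lambda^*_N$, so that the correction term is deterministic and the $M$ summands in (I) remain i.i.d.
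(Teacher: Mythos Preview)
Your proposal is correct and follows essentially the same approach as the paper: the paper packages the argument by rewriting both sides in terms of $f_\epsilon^{\lambda_*}$ and then invoking Lemma~\ref{lem:2} directly, noting that $\VV[(\varphi,f_\epsilon^{\lambda_*})]=(1-\rho^2)\VV[(\varphi,f_\epsilon)]$ from Theorem~\ref{th:lambda} and that $(\varphi,f_\epsilon^{\lambda_*})-(\varphi,f_{\epsilon,N}^{\lambda_*})=(\varphi,f_\epsilon)-(\varphi,f_{\epsilon,N})$, which are exactly your observations in (I) and (II) respectively. Your explicit telescoping decomposition simply unpacks what the paper's appeal to Lemma~\ref{lem:2} does implicitly.
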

\begin{proof}
Let us observe that
\[
\mathbb E[(\varphi,f_\epsilon)]=\mathbb E[(\varphi,f^{\lambda_*}_\epsilon)],\qquad E^{\lambda_*}_M[(\varphi,f_{\epsilon,_N})=E_M[(\varphi,f^{\lambda_*}_{\epsilon,N})]
\]
and then, using Theorem \ref{th:lambda}, we get 
\[
\nu^2_{(\varphi,f_\epsilon^{\lambda_*})} = \VV[(\varphi,f_\epsilon^{\lambda_*})] = \left(1-\rho^2_{(\varphi,f_\epsilon), (\varphi,\h)}\right)\ \VV[(\varphi,f_\epsilon)].
\]
The conclusion follows from Lemma \ref{lem:2} together with the identity
\[
(\varphi,f_\epsilon^{\lambda_*})-(\varphi,f_{\epsilon,N}^{\lambda_*})=(\varphi,f_\epsilon)-(\varphi,f_{\epsilon,N}).
\]
\end{proof}
In the case of the reconstruction function \eqref{eq:f_RNz}, we have
\[
\begin{split}
\left\| \mathbb E[f_\epsilon](t,\cdot)-E^{\lambda_*}_M[f_{\epsilon,N,\Delta w}](t,\cdot)\right\|_{L^p(V,L^2(\Omega,L^2(V)))} &  \\
&\hskip -1.3cm \leq \left\| \mathbb E[f_\epsilon](t,\cdot)-\mathbb E[f_{\epsilon,\Delta w}](t,\cdot)\right\|_{L^p(V)}\\
&\hskip -1.3cm +\left\| \mathbb E[f_{\epsilon,\Delta w}](t,\cdot)-E^{\lambda_*}_M[f_{\epsilon,N,\Delta w}](t,\cdot)\right\|_{L^p(V,L^2(\Omega,L^2(V)))},
\end{split}
\]
where the first term is bounded as in Theorem \ref{th:2} and the second term can be bounded using Lemma \ref{lem:3} with $\phi(\cdot)=S_{\Delta w}(w-\cdot)$. Thus we proved the following result.
 \begin{theorem}
The error introduced by the reconstruction function \eqref{eq:f_RNz} in the MFCV-DSMC method satisfies
\begin{equation}
\label{eq:estim_3}
\begin{split}
\left\| \mathbb E[f_\epsilon](t,\cdot)-E^{\lambda_*}_M[f_{\epsilon,N,\Delta w}](t,\cdot)\right\|_{L^p(V,L^2(\Omega,L^2(V)))}&\\
 \leq 
 \frac{\left\|\left(1-\rho^2_{(S,f_\epsilon), (S,\h)}\right)^{1/2}\nu_{(S,f_\epsilon)}\right\|_{L^p(V)}} {M^{1/2}} + \frac{\|\sigma_{S,M}\|_{L^p(V)}} {N^{1/2}} + C_{\mathbb E[f]} (\Delta w)^q
\end{split}
\end{equation}
where and $\nu_{(S,f_\epsilon)}^2$ is defined as in \eqref{eq:nus} and $\sigma^2_{S,M}$ is defined in Theorem \ref{th:2}.
\label{th:4}
\end{theorem}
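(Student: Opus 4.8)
The plan is to mirror the argument already carried out for Theorem \ref{th:2}, replacing the plain Monte Carlo estimate of Lemma \ref{lem:2} by its control-variate counterpart, Lemma \ref{lem:3}. First I would insert the deterministic smoothed density $\mathbb E[f_{\epsilon,\Delta w}]$ as an intermediate term and apply the triangle inequality in the norm \eqref{eq:norm2}, splitting the total error into a purely deterministic reconstruction contribution $\left\|\mathbb E[f_\epsilon]-\mathbb E[f_{\epsilon,\Delta w}]\right\|_{L^p(V)}$ and a statistical contribution $\left\|\mathbb E[f_{\epsilon,\Delta w}]-E_M^{\lambda_*}[f_{\epsilon,N,\Delta w}]\right\|_{L^p(V,L^2(\Omega,L^2(V)))}$.

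The deterministic term is handled exactly as in Theorem \ref{th:2}: it measures the smoothing bias of the kernel $S_{\Delta w}$ against the exact expectation $\mathbb E[f_\epsilon]$, and is entirely independent of which estimator is used in the random space. Hence the bound $C_{\mathbb E[f]}(\Delta w)^q$ carries over verbatim, the control variate influencing only the statistical part of the estimate.

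For the statistical term I would fix $w\in V$ and apply Lemma \ref{lem:3} with the observable $\varphi(\cdot)=S_{\Delta w}(w-\cdot)$, so that $\mathbb E[(\varphi,f_\epsilon)]=\mathbb E[f_{\epsilon,\Delta w}](w)$ and $E_M^{\lambda_*}[(\varphi,f_{\epsilon,N})]=E_M^{\lambda_*}[f_{\epsilon,N,\Delta w}](w)$. This yields, pointwise in $w$, the root-mean-square bound
\[
\left(1-\rho^2_{(S,f_\epsilon),(S,\h)}\right)^{1/2}\frac{\nu_{(S,f_\epsilon)}}{M^{1/2}} + \frac{\sigma_{S,M}}{N^{1/2}},
\]
where $\nu_{(S,f_\epsilon)}$, $\sigma_{S,M}$ and the correlation $\rho_{(S,f_\epsilon),(S,\h)}$ all depend on $w$ through the reconstruction kernel. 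Taking the $L^p(V)$ norm in $w$ and using its subadditivity to separate the two summands then produces the two stated statistical terms, completing the bound.

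The hard part here is bookkeeping rather than a genuine obstacle. One must keep the variance-reduction factor $\left(1-\rho^2_{(S,f_\epsilon),(S,\h)}\right)^{1/2}$ together with $\nu_{(S,f_\epsilon)}$ inside the $L^p(V)$ norm, since both vary with $w$ and no global constant can be factored out; this is precisely why the first term appears as $\left\|\left(1-\rho^2_{(S,f_\epsilon),(S,\h)}\right)^{1/2}\nu_{(S,f_\epsilon)}\right\|_{L^p(V)}$ rather than as a product of separate norms. I would also verify, as in the proof of Lemma \ref{lem:3}, that applying the control variate to the kernel observable leaves the target expectation $\mathbb E[(\varphi,f_\epsilon)]$ unchanged and that the difference $(\varphi,f_\epsilon^{\lambda_*})-(\varphi,f_{\epsilon,N}^{\lambda_*})$ reduces to $(\varphi,f_\epsilon)-(\varphi,f_{\epsilon,N})$, so that the $N^{-1/2}$ DSMC term is untouched by the variance reduction and only the $M^{-1/2}$ sampling term acquires the factor $\left(1-\rho^2\right)^{1/2}$.
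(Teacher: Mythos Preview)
Your proposal is correct and follows essentially the same argument as the paper: split via the triangle inequality with $\mathbb E[f_{\epsilon,\Delta w}]$ as the intermediate term, bound the deterministic reconstruction piece exactly as in Theorem~\ref{th:2}, and bound the statistical piece by applying Lemma~\ref{lem:3} with $\varphi(\cdot)=S_{\Delta w}(w-\cdot)$ and then taking the $L^p(V)$ norm. Your remarks on keeping the correlation factor inside the $L^p$ norm and on the control variate leaving the $N^{-1/2}$ term unaffected are accurate elaborations of details the paper leaves implicit.
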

As a consequence when the solution of the full model is close to the solution of the control variate the statistical error due to the uncertainty vanishes. This justifies the use of a larger number of samples in the state space in agreement with the reconstruction used in order to balance the last two error terms in \eqref{eq:estim_3}.

\section{Numerical Examples}
In this section, we present several numerical examples of the mean field control variate method with application to different kinetic models in socio-economic sciences. 
The presented test cases consider uncertain initial data and uncertain interaction coefficients as well. We start presenting relevant tests on the steady state control variate approach that, we remark, has no impact on the simulation costs, since the expected value of steady states can be computed offline with arbitrary accuracy.  Subsequently, we consider the general mean field control variate approach where the mean-field model is solved using a second order structure preserving finite difference method \cite{pareschi2018structure}. In this case, we cannot perform the computation of $\mathbb E[\g[\cdot]]$ offline. Hence, the computational costs play a relevant role and we will discuss the connection between simulation costs and performance. 

\subsection{Mean Field Steady State Control Variate (MFCV-S)}

We consider in this section the case where the steady state $\h_\infty(w,z)$ of the mean-field model is analytically known and is used as control variate. We will refer to this method, where the control variate term is evaluated off line, as  Mean Field Steady State Control Variate (MFCV-S). If not otherwise indicated the quantity of interest is here the distribution itself $\g[f_\epsilon] = f_\epsilon$. 
The reconstruction is performed using standard first order histogram approximation with $N_Z=100$ grid points. In order to compute with negligible error $\mathbb E[\h_\infty]$ we adopt a stochastic collocation approach with $20$ collocation nodes.

\subsubsection*{Test 1: Opinion model with uncertainty}\label{sect:opinion}

\begin{figure}
\begin{center}
\includegraphics[scale = 0.35]{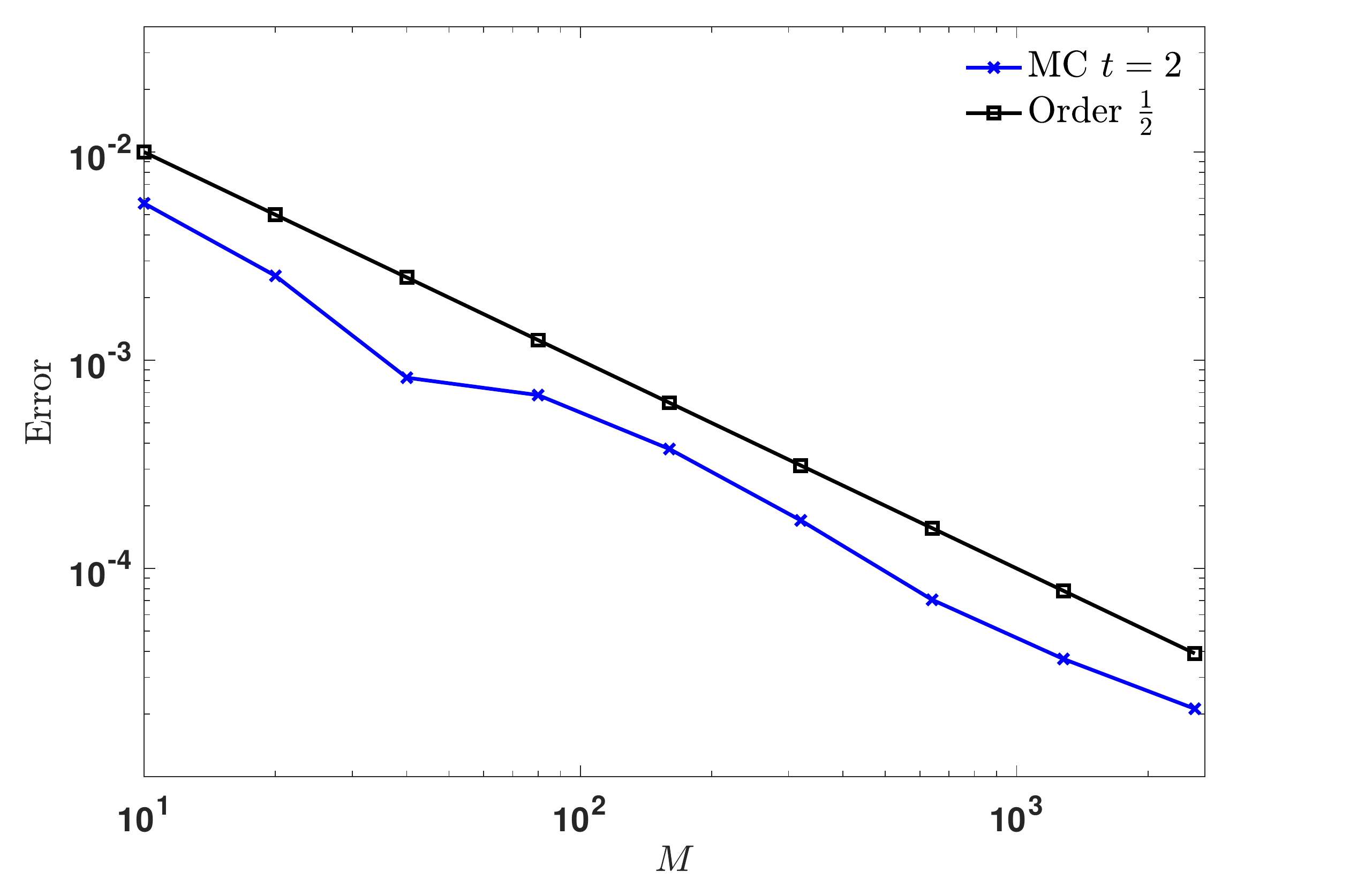}
\caption{\textbf{Test 1}. Convergence of the expected value of the solution $\mathbb E[f_\epsilon]$, in the case of the Boltzmann model for opinion formation \eqref{eqn:A} with uncertain initial data, for the classical MC sampling method at time $t=2$. The solutions are averaged over $50$ runs to reduce statistical fluctuations. The DSMC solver has been implemented with $N=2\times 10^4$ samples and $\epsilon=10^{-1}$. }\label{MCFig}
\end{center}
\end{figure}

We consider the kinetic model for opinion formation resulting from binary interactions \eqref{InterOpinion} with uncertainties present on the initial distribution or on the interaction strength. 
Let us first assume that the uncertainty is present in the initial distribution of the problem. More precisely, we assume that $z\sim \mathcal{U}([0, 1])$ and the initial distribution $f_0(w,z)$ is given by
\begin{equation}
f_0(w,z) = 
\begin{cases}
1 & w \in \left[\dfrac14 (z-2) , \dfrac14 (z+2)\right] \\
0 & \textrm{otherwise}. 
\end{cases}
\label{eq:t1}
\end{equation} 
Hence, the first moment $m(z) = {z}/{4}$  is conserved in time. We also assume 
\begin{equation}
\label{eqn:A}
p(|v-w|,z) = 1,\qquad D(v)= \sqrt{1-v^2},
\tag{A}
\end{equation}
and thus we obtain a steady state of the Fokker-Planck equation of the form $\h_\infty(w,z)$ given by \eqref{eq:steady_beta}.  
The Boltzmann equation is solved with $N=2\times 10^4$ particles up to $t=5$ where the solution approaches the steady state. 

In Figure \ref{MCFig},  we report the $L^2$ error of $\mathbb E[f_\epsilon]$ at $t=5$ for different number of samples $M$ from which we can clearly deduce the MC convergence rate of $\frac12$. The convergence towards $\mathbb E[\h_\infty]$ is given in the top row of Figure \ref{Test1Dist} for different  $\epsilon = 10^{-1},10^{-2}$ and different number of samples $M = 20$ (left plot) and $M=1280$ (right plot). The bottom row of Figure  \ref{Test1Dist} shows that using the MFCV-S approach, in comparison to the classical MC method, for $M=20$ we obtain already a good fit to the mean field steady state for $\epsilon=10^{-2}$. 

\begin{figure}
\begin{center}
\includegraphics[scale = 0.28]{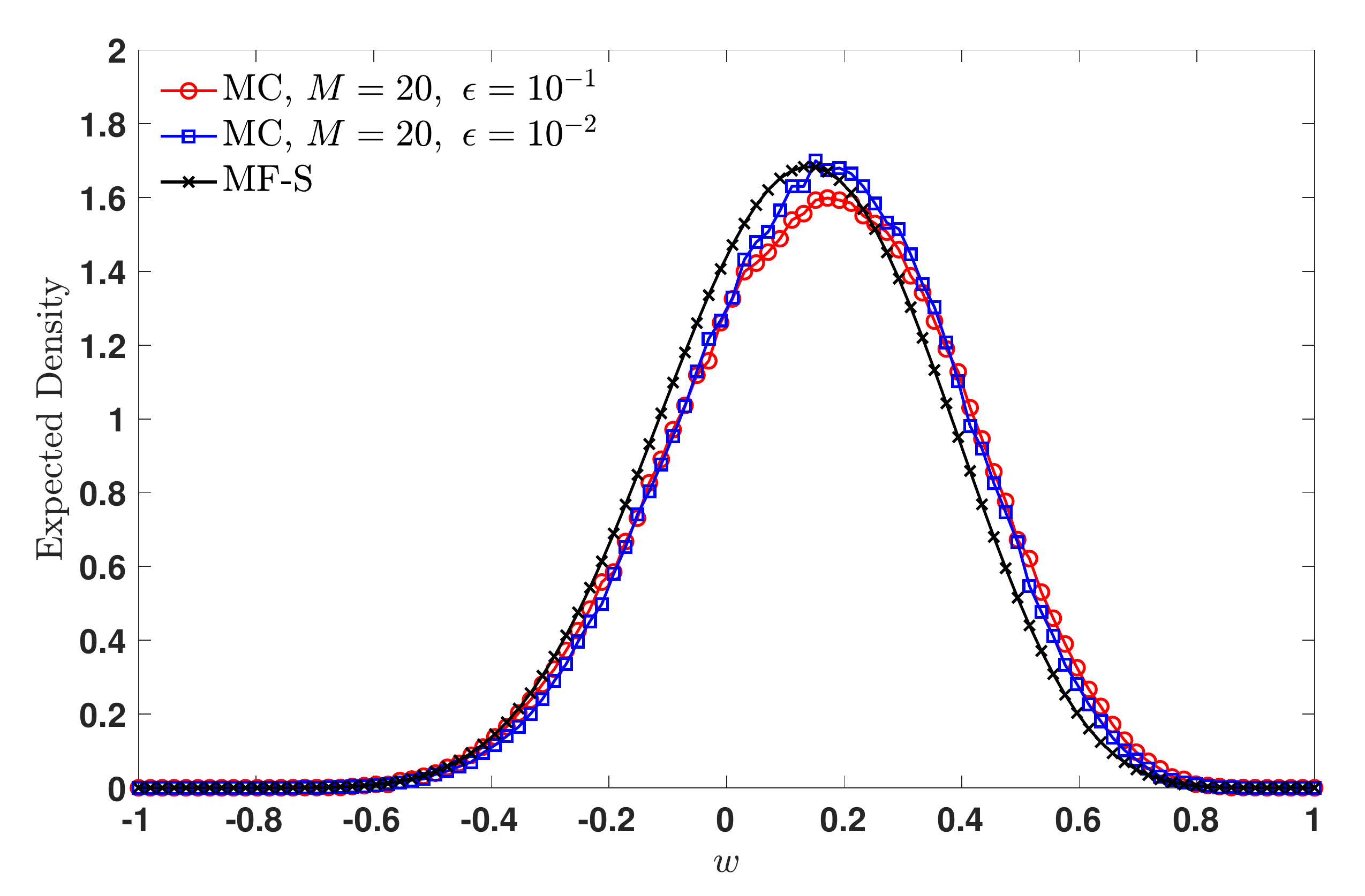}
\includegraphics[scale = 0.28]{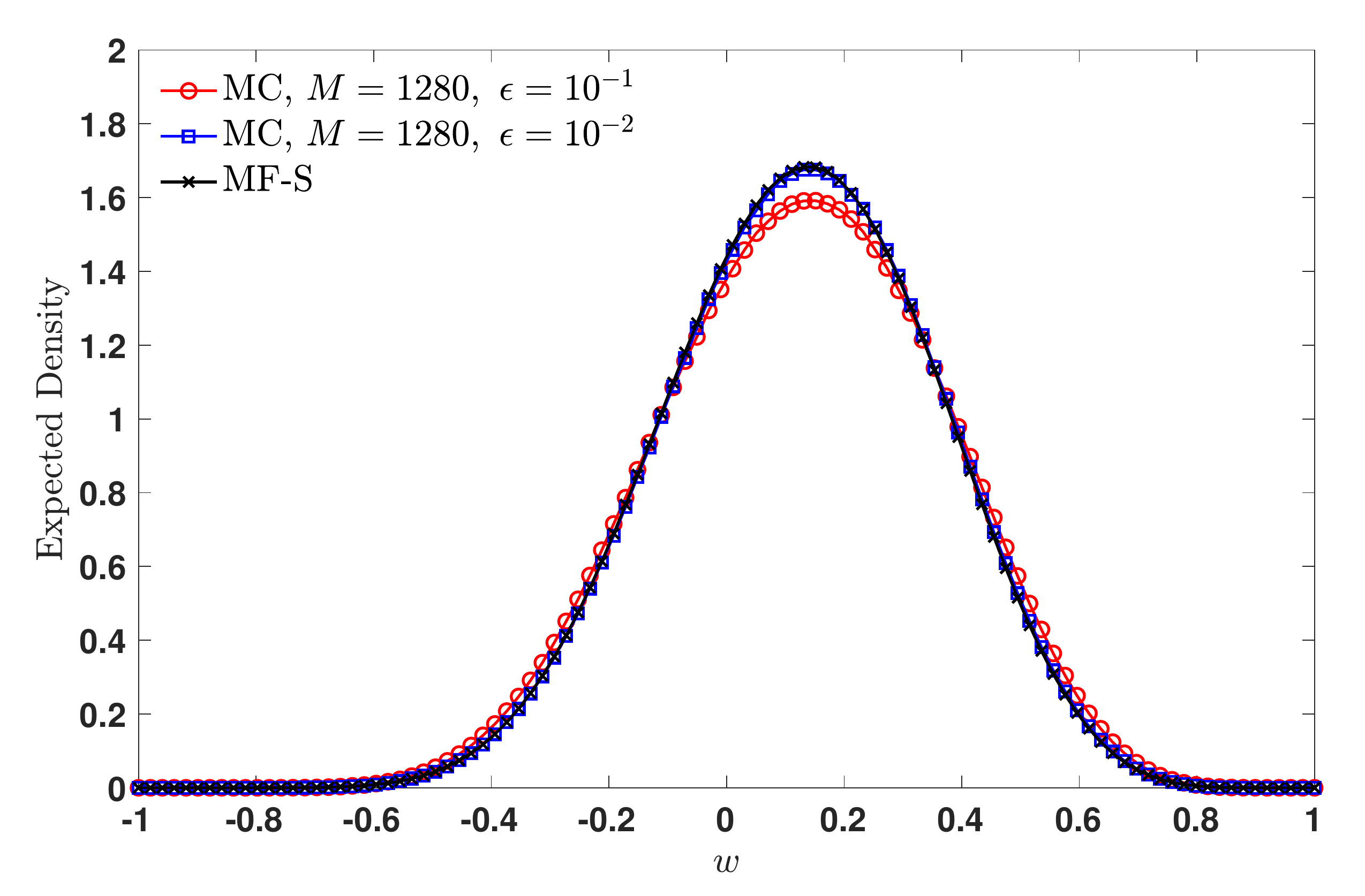} \\
\includegraphics[scale = 0.28]{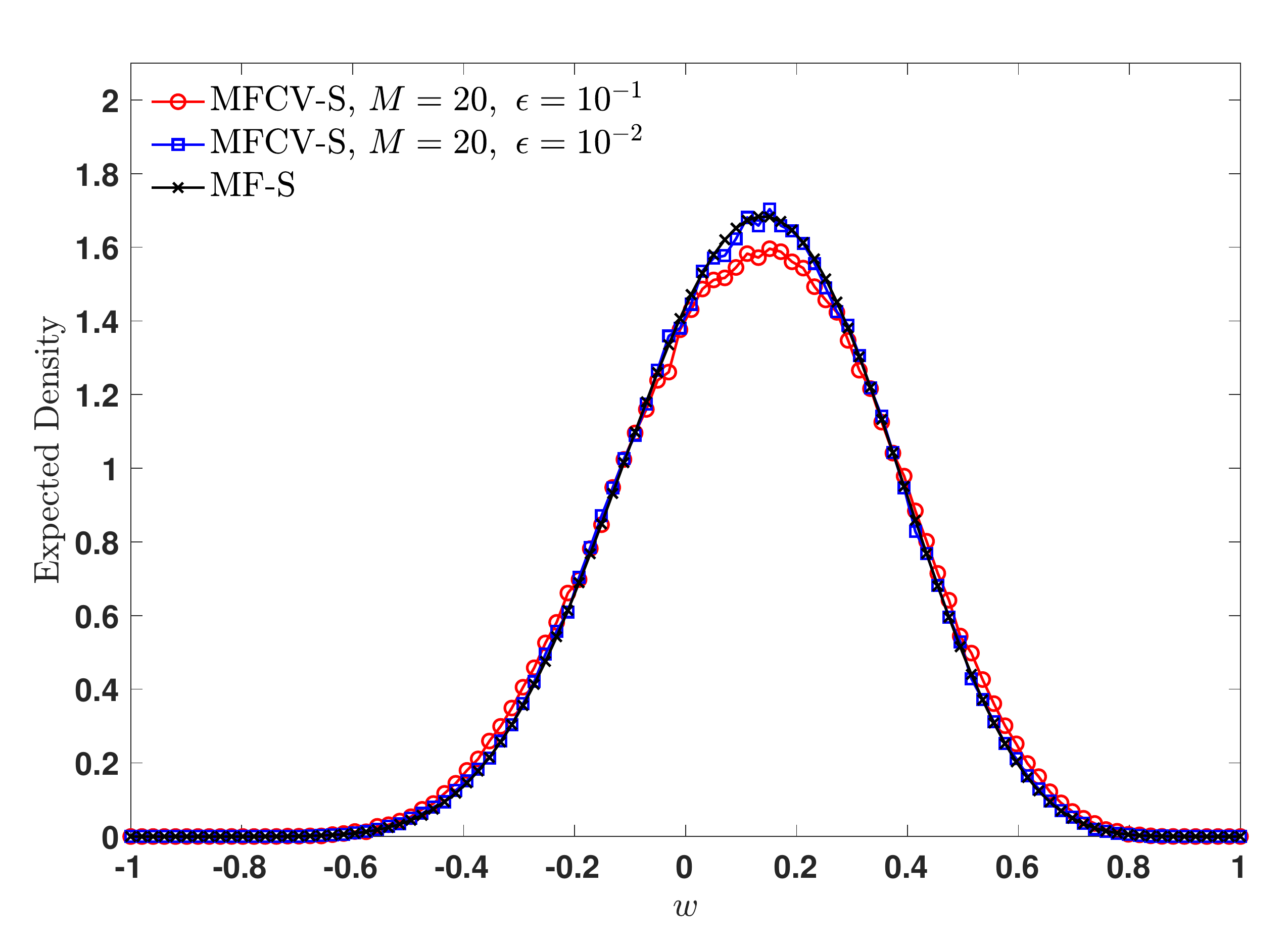}
\includegraphics[scale = 0.28]{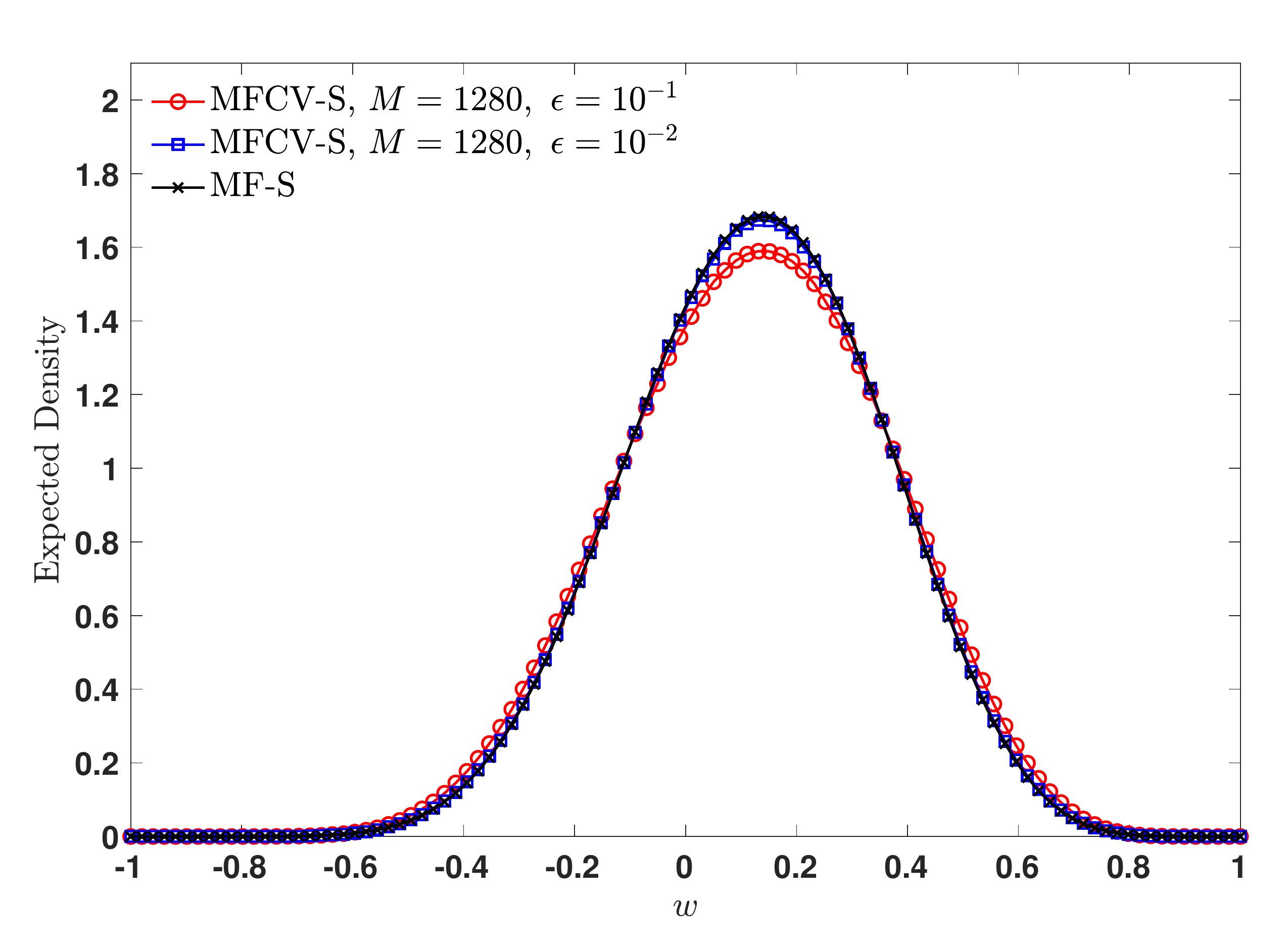}
\caption{\textbf{Test 1}. Approximation of $\mathbb E[f_\epsilon]$, in the case of the Boltzmann model for opinion formation \eqref{eqn:A} with uncertain initial data. The DSMC solver has been implemented with $N=2\times 10^4$. We considered different scalings $\epsilon = 10^{-1}, 10^{-2}$ and two different number of samples $M = 20$ (left) and $M = 1280$ (right).  The cross markers represent $\mathbb E[\h_\infty]$. (Top) standard MC sampling; (Bottom) MFCV-S method. }\label{Test1Dist}
\end{center}
\end{figure}

In Figure \ref{ErrorSamples} (left) we report the $L^2$ error of the expected density obtained by the MC and MFCV-S methods for increasing number of samples at time $t = 5$. We obtain an improvement in accuracy for the MFCV-S between one and two orders of magnitude using the same number of samples. 
\begin{figure}
\begin{center}
\includegraphics[scale = 0.28]{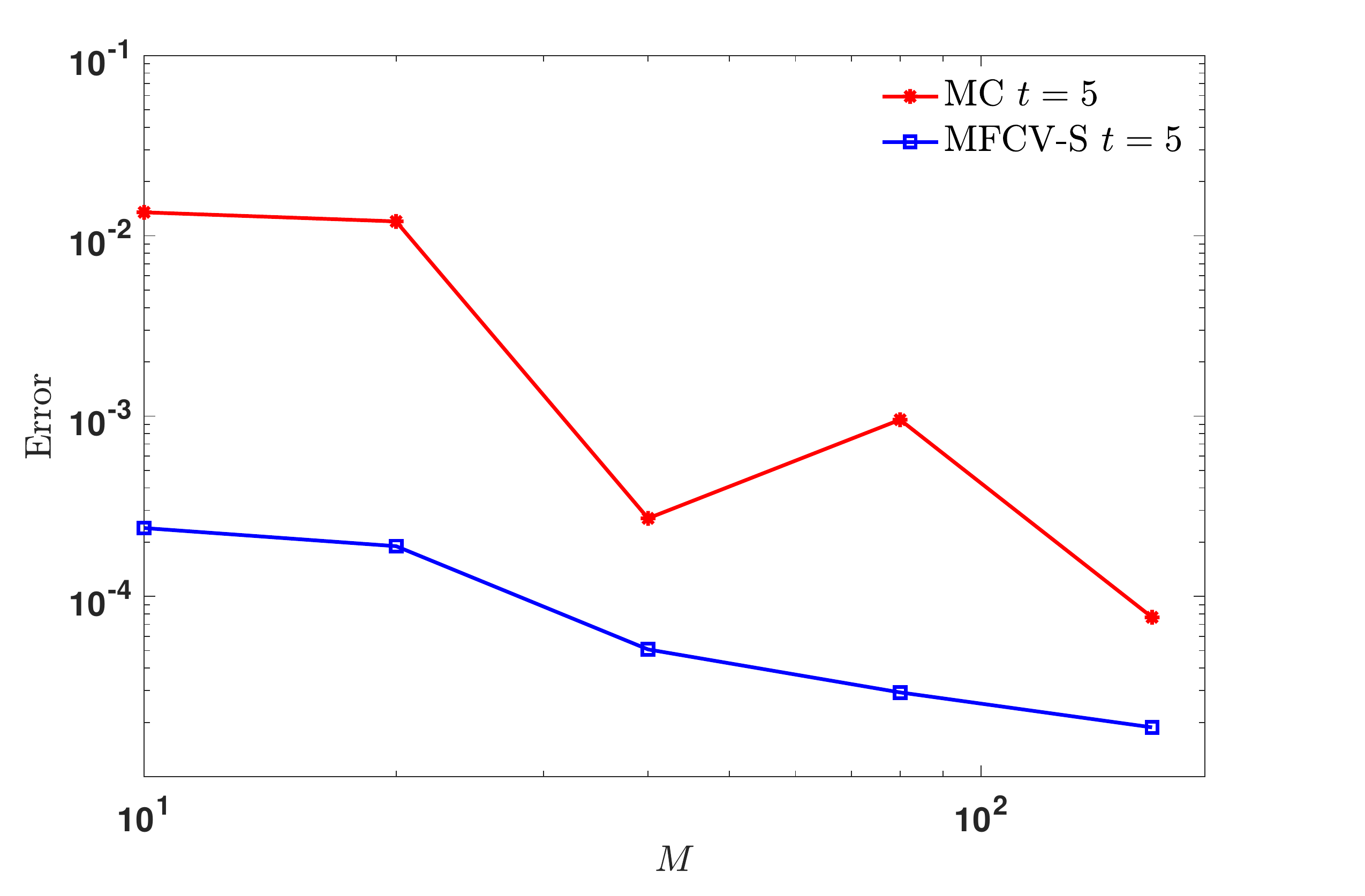}
\includegraphics[scale = 0.28]{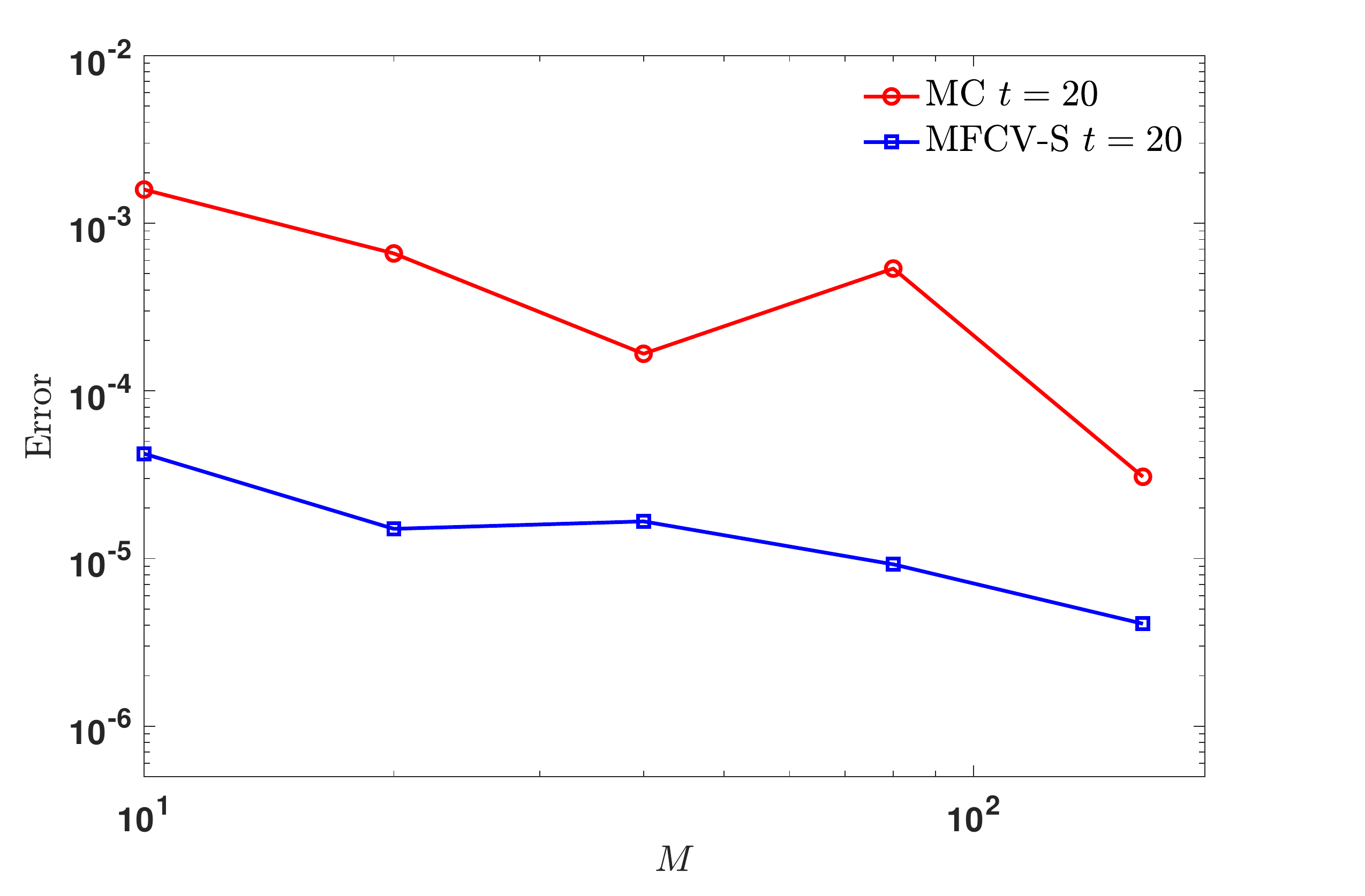}
\caption{\textbf{Test 1}. Error of the MFCV-S estimate and classical MC method in the case of the Boltzmann model for opinion formation for increasing samples $M$. We considered $N=2\times 10^4$ in the DSMC solver. (Left) Solution at $t=5$ of case \eqref{eqn:A} with uncertain initial data; (Right) Solution at $t=20$ of case \eqref{eqn:B} with uncertain interaction parameters.}\label{ErrorSamples}
\end{center}
\end{figure}

As a further test for opinion dynamics we consider the kinetic model resulting from binary interactions \eqref{InterOpinion} with 
\begin{equation}
\label{eqn:B}
p(|w-v|,z)= \dfrac{3}{4}+\dfrac{z}{4},\qquad z\sim\mathcal{U}([-1,1]),\qquad D(w) = {1-w^2},
\tag{B}
\end{equation}
so that the resulting steady state of the Fokker-Planck model is the Maxwellian-like distribution \eqref{eq:steady_max}. The initial data in this case is \eqref{eq:t1} in the deterministic setting $z=0$ and the final time is $t=20$.

In Figure \ref{ErrorSamples} (right) we report the $L^2$ error of expected probability distribution function computed by the MC and MFCV-S method at the final time for different number of samples. As in case \eqref{eqn:A} we obtain an improvement between one and two orders of accuracy for the MFCV-S method compared to the classical MC method.

\subsubsection*{Test 2: Wealth model with uncertainty}

We study two test cases of the CPT model defined by the binary interaction scheme \eqref{InterWealth}. First, we consider uncertainty in the initial condition and secondly in the saving propensity. We will consider as computational domain the interval $[0,10]$.

Let us first assume that the uncertainty is present in the initial distribution of the problem. In details, we consider $z \in \mathcal U([0,1])$ and the initial  distribution $f_0(w,z)$ defined by 
\begin{equation*}
f_0(w,z) = 
\begin{cases}
\dfrac{1}{2} & w \in \left[\dfrac{z}{5},2+\dfrac{z}{5} \right] \\
0 & \textrm{otherwise}.
\end{cases}
\end{equation*}  
Furthermore, we consider 
\begin{equation}
\lambda(z) = 1,\qquad D(w) = w,
\tag{A}
\label{eqn:A2}
\end{equation}
so that the large time behavior of the Fokker-Planck model $\h_\infty(w,z)$ is given by \eqref{eq:steady_invgamma} with $m(z) = 1+ \dfrac{z}{5}$.

In Figure \ref{QoITest2}, we show the MC and MFCV-S approximations of $\mathbb E[f_\epsilon]$ and of the expected value of the Lorenz curve, see Section \ref{sect:QoI}, at the final time $t=30$ for $\epsilon=10^{-2}$. We can observe a better agreement of the solution of the MFCV-S method with the 
expected steady state solution of the mean field model than the solution obtained by the MC solver. 

In Figure \ref{CVTest2} we compare the expected QoI computed by the MC and MVCV-S solver for different number of samples $M$ plotted  against time. We can observe that the MC method needs at least $8$ times more samples in order to reach the same accuracy than the MFCV-S method with $10$ samples. 

\begin{figure}
\begin{center}
\includegraphics[width=0.5 \textwidth]{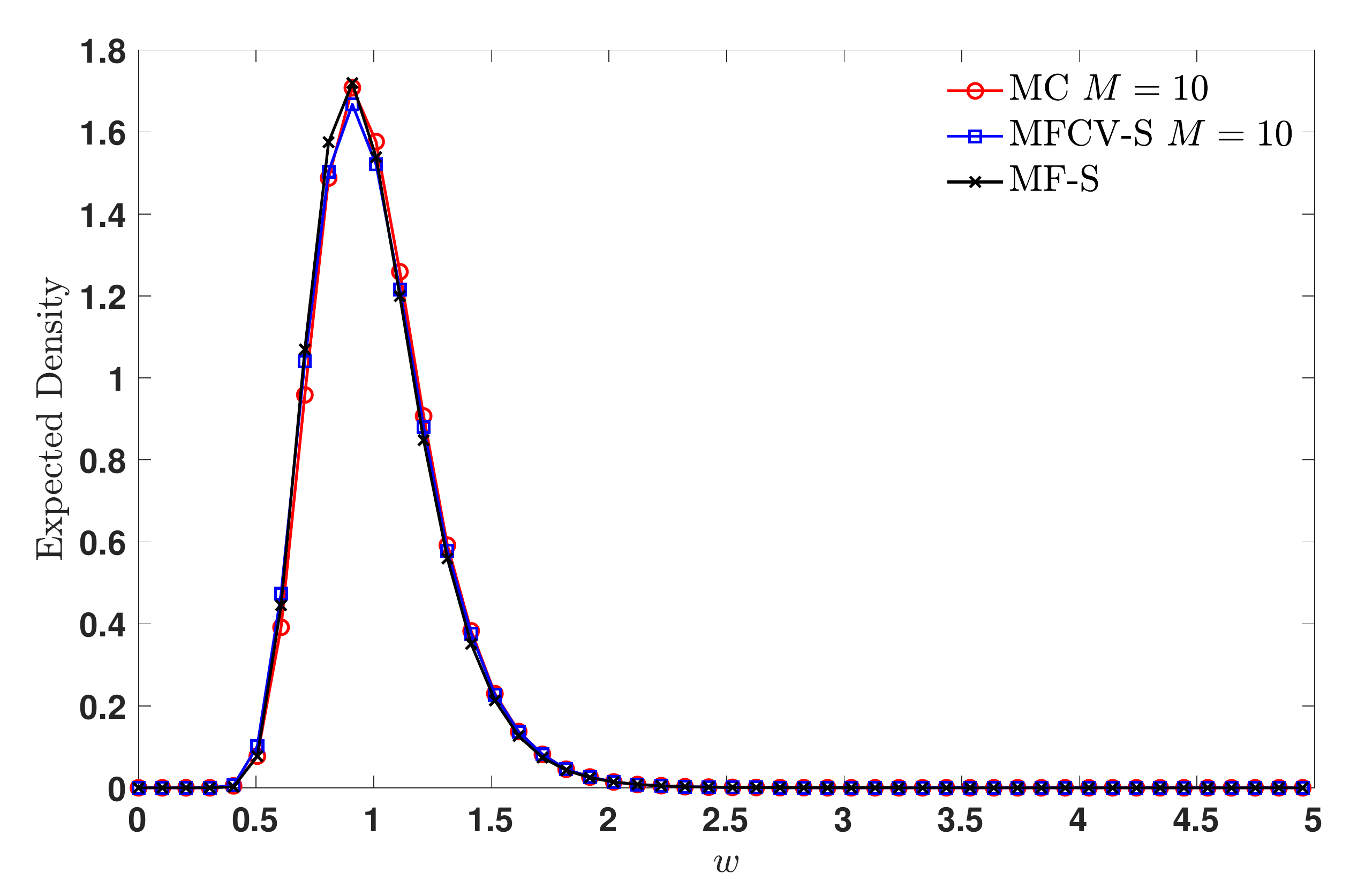}\hfill
\includegraphics[width=0.5 \textwidth]{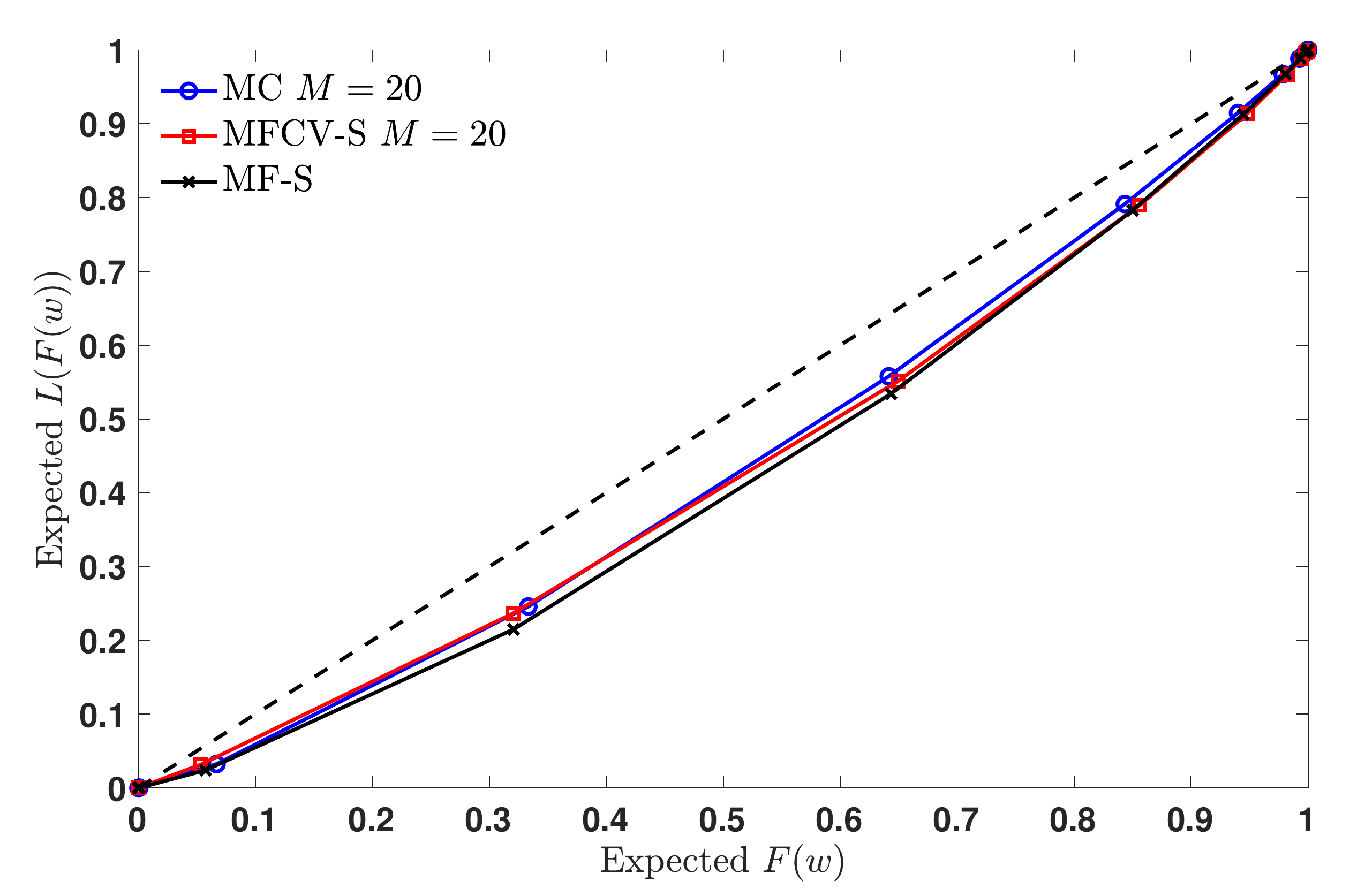}
\caption{\textbf{Test 2}. Expected distribution $\mathbb E[f_\epsilon]$ and expected Lorentz curve for the MC, MFCV-S and the mean field model for problem \eqref{eqn:A2}. The DSMC solver of the Boltzmann model used $N=2\times 10^4$ particles. 
Left: Expected distribution function $\mathbb E[f]$ computed with $M=10$ samples for the MC and MFCV-S method. Right: Expected Lorenz curve computed with $M=20$ samples for the MC and MFCV-S method.}\label{QoITest2}
\end{center}
\end{figure}

\begin{figure}
\begin{center}
\includegraphics[width=0.5 \textwidth]{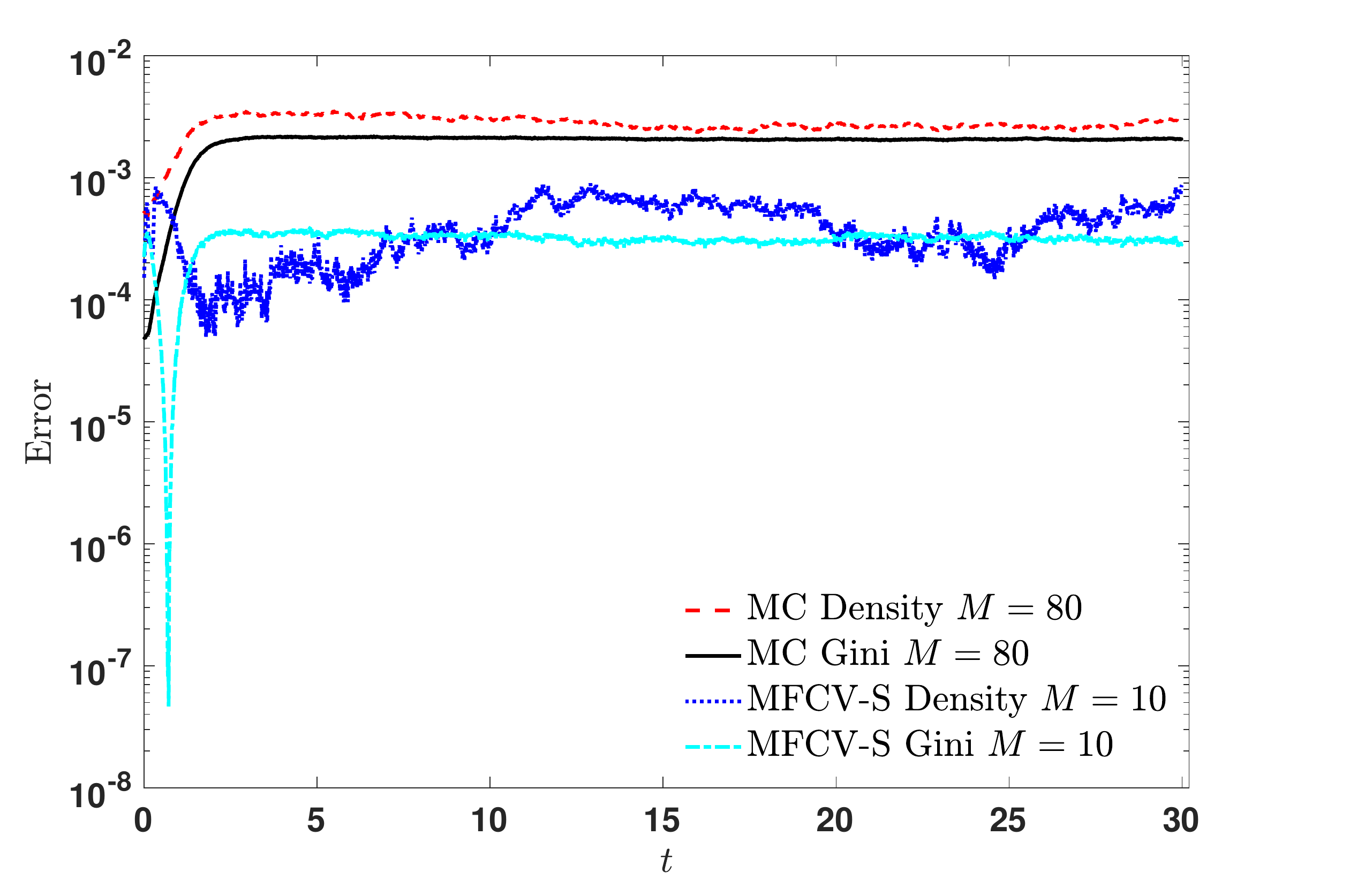}\hfill
\includegraphics[width=0.5 \textwidth]{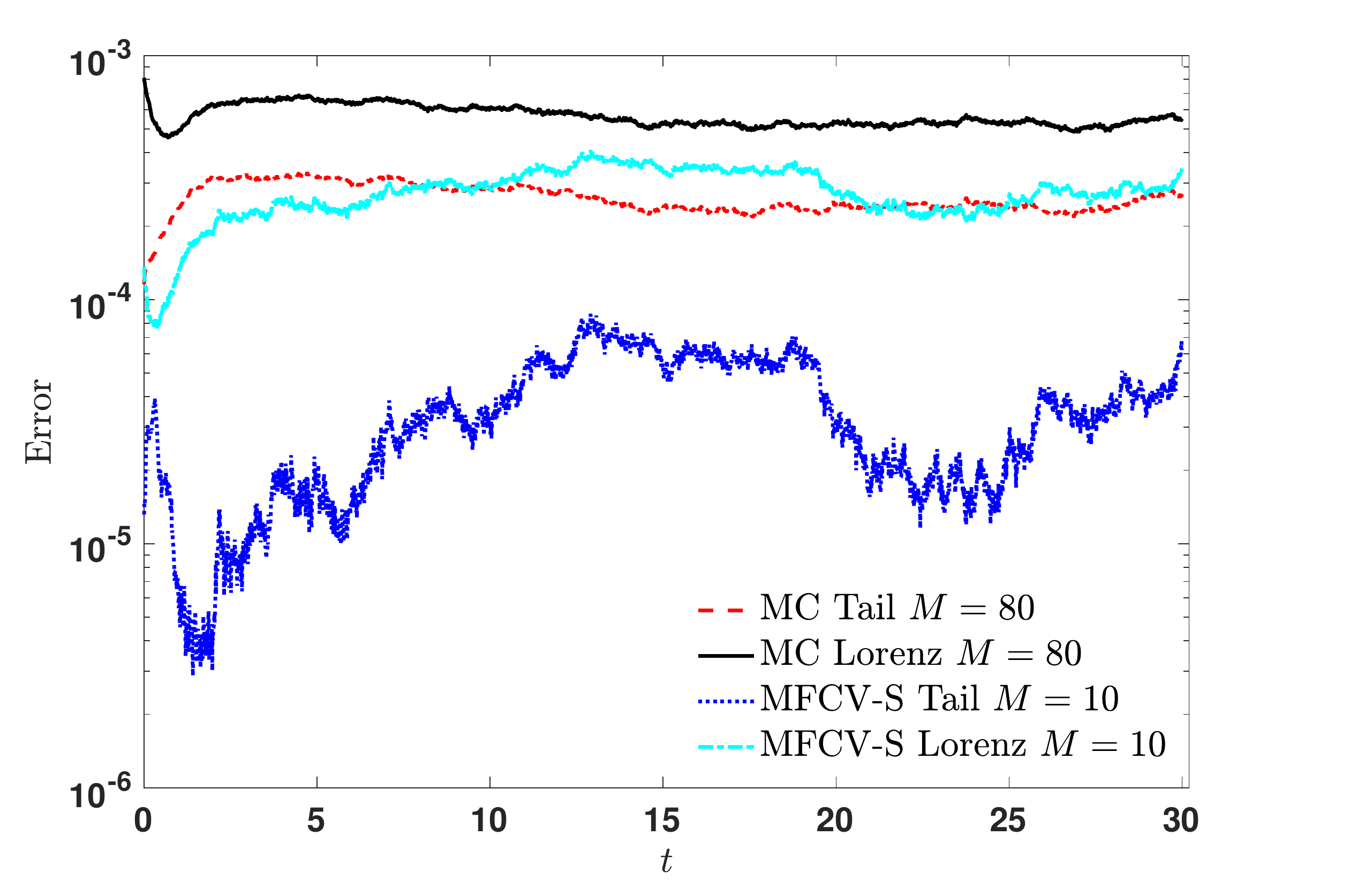}
\caption{\textbf{Test 2}. The $L_2$ error for the expected QoI computed for problem \eqref{eqn:A2} by the MC and MFCV-S method against time.  On the left we show the expected probability density function and Gini coefficient, whereas on the right the expected tail index and Lorenz curve. The DSMC solver has been implemented with $N=2\times 10^4$ particles.  }\label{CVTest2}
\end{center}
\end{figure}

Next, we consider uncertainty in the interaction 
\begin{equation}
\lambda(z) = \frac{1}{2}+ \frac{z}{4},\qquad z\sim\mathcal{U}([-1,1]),\qquad D(w)=w.
\tag{B}
\label{eqn:B2}
\end{equation}
The initial condition is uniformly distributed on $[0,2]$, so that the large time behavior of the Fokker-Planck model is given by \eqref{eq:steady_invgamma} with $m_{\h} \equiv 1$. 

In Figure \ref{SamplesTest4} we present the $L^2$ error for $\mathbb E[f]$ at final time $t=30$ computed by the MC and MFCV-S method for $M = 10$. We deduce that similar to the previous test cases the MFCV-S method improves the accuracy in comparison to the MC method between one and two orders of magnitude. 

\begin{figure}
\begin{center}
\includegraphics[scale = 0.35]{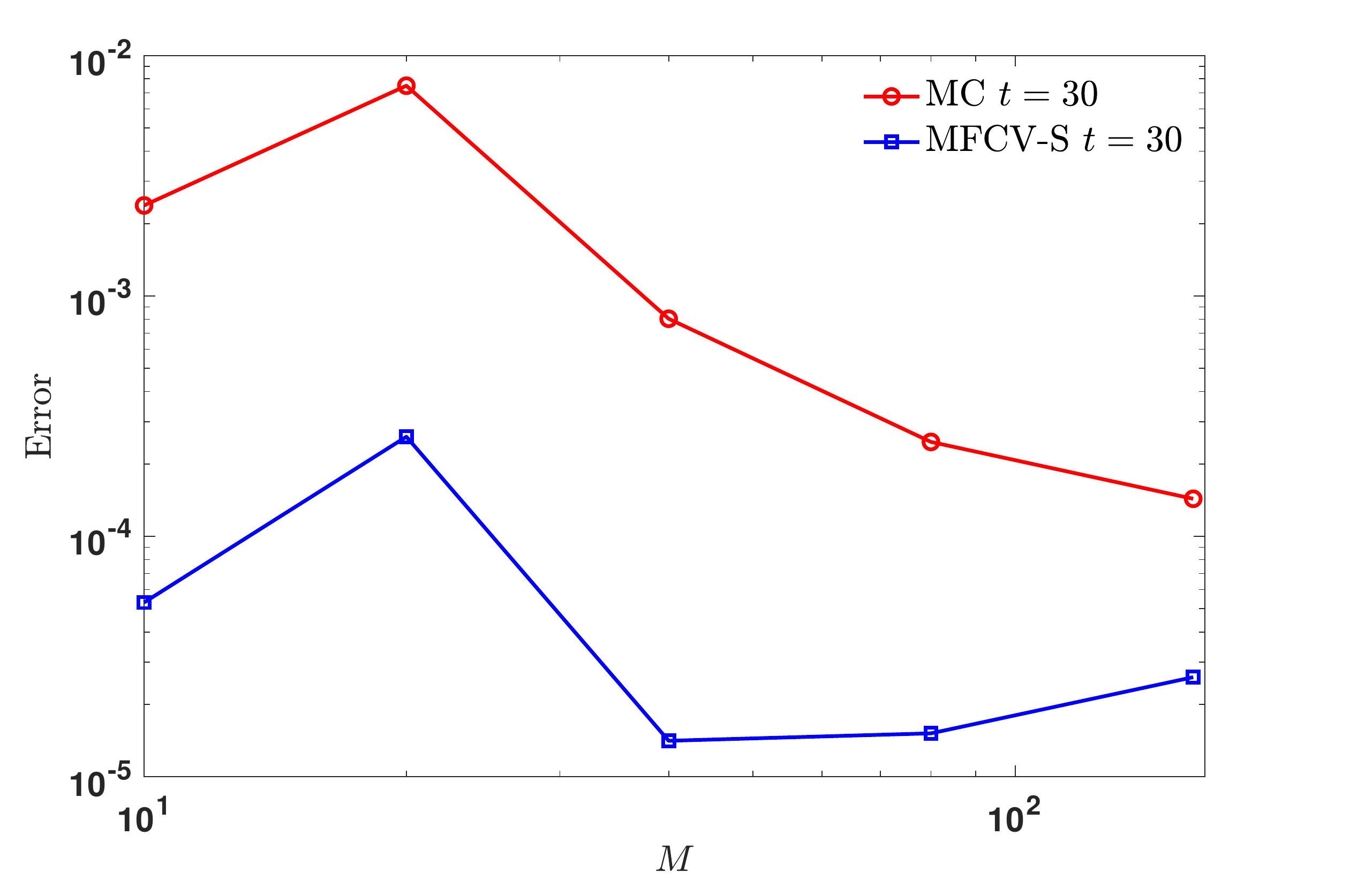}
\caption{\textbf{Test 2}. $L_2$ error for the expected distribution $\mathbb E[f_\epsilon]$ for problem \eqref{eqn:B2} computed by the  MC and MFCV-S method for different number of samples $M$ at $t=30$. We considered $N=2\times 10^{4}$ in the DSMC solver. } 
\label{SamplesTest4}
\end{center}
\end{figure}

\subsection{General Mean Field Control Variate (MFCV)}
The MFCV method relies on the numerical solution of the Fokker-Planck equation \eqref{FP} which is able to capture the transient behavior 
much better than the MFCV-S method. In fact the Fokker-Planck equation \eqref{FP} is computationally less expensive than the original kinetic model since the expensive integral operator is replaced by the simpler differential Fokker-Planck operator. In order to solve the mean field model we adopted the second order structure preserving method for nonlocal Fokker-Planck equations developed in \cite{pareschi2018structure}. Indeed, for our control variate method it is of immanent importance that the scheme is able to capture the asymptotic behavior of the model with arbitrary accuracy.  

We denote the numerical approximation of the mean field model at time $t^n$ by $\h^n_{\Delta w}$ and thus the control variate estimate for the QoI $q[f^n_\varepsilon]=q[f_\varepsilon(t^n)]$ becomes
 \begin{equation}
 E^{\lambda^*}_M[\g_N[f^n_\epsilon]]= E_M[ \g_N[f_\epsilon^{n}]]-  \lambda^{*,n}_M\ (E_M[\g[\h_{\Delta w}^{n}]]-\mathbb E[q[\h_{\Delta w}^{n}]]).
 \label{eq:mfcvg}
 \end{equation}
For the structure preserving method we select a semi implicit time integrator and a Gauss quadrature rule to evaluate the fluxes as defined in \cite{pareschi2018structure}. The boundary conditions are set to zero flux boundary conditions. As pointed out previously, the computational cost to evaluate \eqref{eq:mfcvg} is not negligible due to the time dependence of the control variate. In fact, in order to have an accurate estimate of $\mathbb E[q[\h_{\Delta w}^{n}]]$ the Fokker-Planck equation needs to be solved for a sufficiently large number of random samples $M_{MF}$ at each time step and estimate $\mathbb E[q[\h_{\Delta w}^{n}]]\approx E_{MF}[q[\h_{\Delta w}^{n}]]$. 
In the sequel, we discuss the issue of computational costs in more details.  
 
\subsubsection*{Estimating the computational cost} The costs of the solution of the Boltzmann equation at one time step can be estimated by the number of samples in physical space $N$ times the number of samples in random space $M$. We have
$$
\textrm{Cost}(E_M[\g_N[f_\epsilon]])= N\ M. 
$$
For the mean field model with $N_{MF}$ grid points and $M_{MF}$ samples in random space we get:
$$
\textrm{Cost}(E_{M_{MF}}[\g_{N_{MF}}[\h]])= N_{MF}\ M_{MF}. 
$$
Additionally, the total computation costs depend on the selected time step. The time step of the Boltzmann model is allowed to take value $\Delta t\in(0,\epsilon]$ in the DSMC solver. As in all our simulations we choose the maximum allowed value $\Delta t=\epsilon$. Consequently, since we use a semi-implicit method for the mean-field solver with time step $\Delta t_{MF}$ we have
$$
\epsilon=\Delta t = \frac{\Delta t_{MF}}{k},\ k\geq 1.
$$
We aim to control the cost of the mean field model by the total cost of the Boltzmann model, namely
$$
\textrm{Cost}(E_M[\g_N[f]])\geq \textrm{Cost}(E_{M_{MF}}[\g_{N_{MF}}[\h]]. 
$$
Thus, we have to ensure that 
$$
N\ M\geq \frac{N_{MF}\ M_{MF}}{k}
$$
is satisfied. Provided that the number of points and particles in physical space are fixed, we obtain the following upper bound on $M_{MF}$
\begin{align}
M_{MF}\leq \frac{k\ N\ M}{N_{MF}}. \label{uB} 
\end{align}
In practice, for a given $\epsilon$, $N$ and $M$, we define the number of grid points $N_{MF}$ in the deterministic mean-field solver which provides the largest value of the time step $\Delta t_{MF}$ to ensure stability. The maximum allowed number of samples in the mean-field model is then chosen accordingly to \eqref{uB}.

\subsubsection*{Test 3: Opinion model with uncertainty}
In order to show the performance of the MFCV method we select the same setting as defined in Section \ref{sect:opinion}.
The number of grid points of our mean field model is set to $N_{MF}=20$. Furthermore, the number of particles of the Boltzmann model is fixed to $N=2\times 10^4$ and we fix $k=1$. Hence, for $M_{MF}$ and arbitrary $M$ we get accordingly to \eqref{uB} the estimate
\begin{align}\label{upperBound}
M_{MF} \leq \frac{2\times 10^4\ M}{20}= 10^3\ M.
\end{align}
For our simulations we have chosen $M_{MF}=10^4$ which is accordingly to \eqref{upperBound} exactly the upper bound for $M=10$. Thus, the cost of the MFCV method are comparable to the cost of the corresponding MC method.\\

In Figure \ref{NumTest1} we compare the MFCV method with the MFCV-S for different numbers of samples $M$. The QoI is $\mathbb E[f_\epsilon]$. As expected we gain accuracy and the error of the MFCV method is below the error of the MFCV-S method in transient times, we considered here $t=0.1$.

\begin{figure}
\begin{center}
\includegraphics[width=0.5 \textwidth]{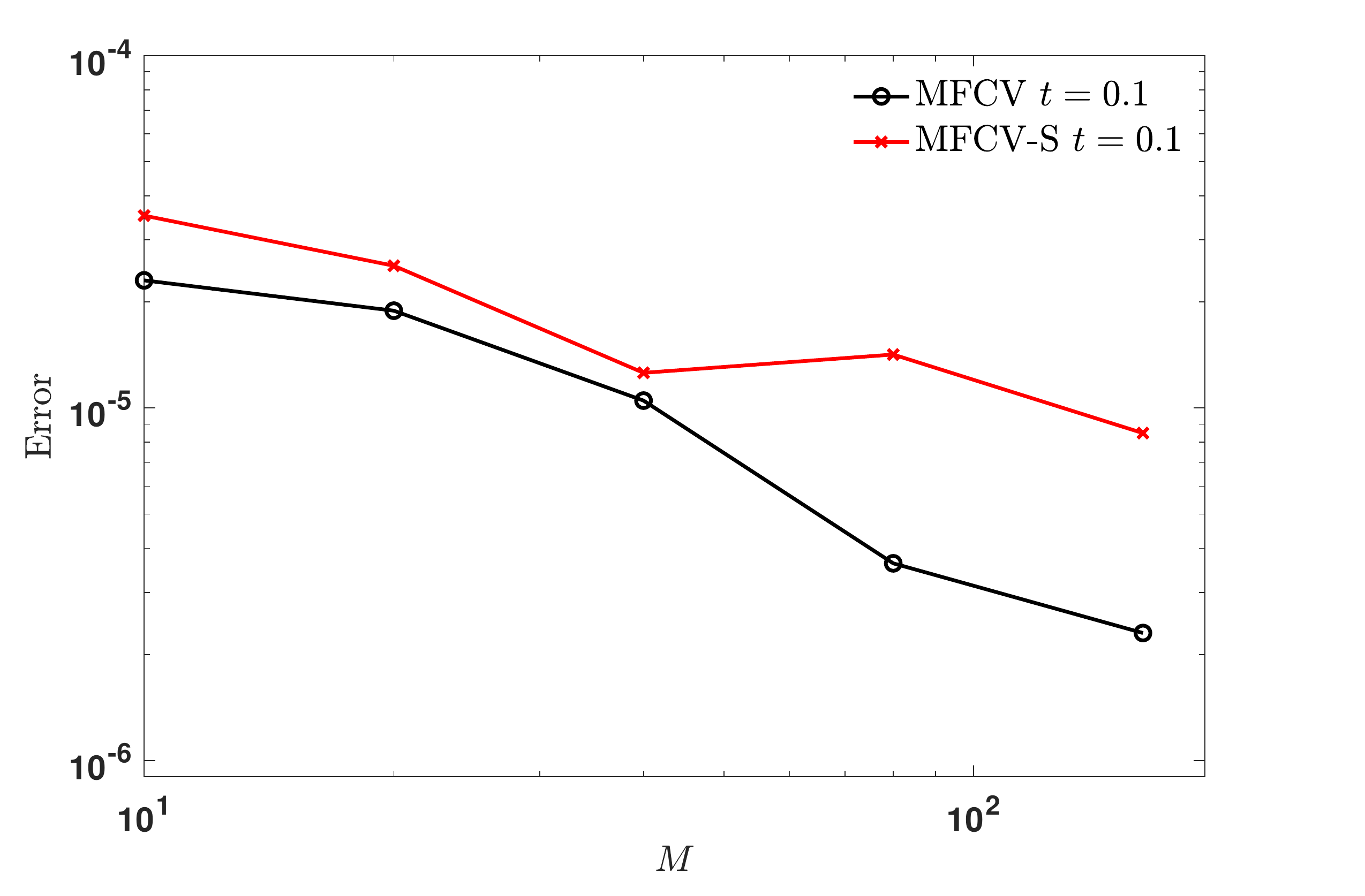}\hfill
\includegraphics[width=0.5 \textwidth]{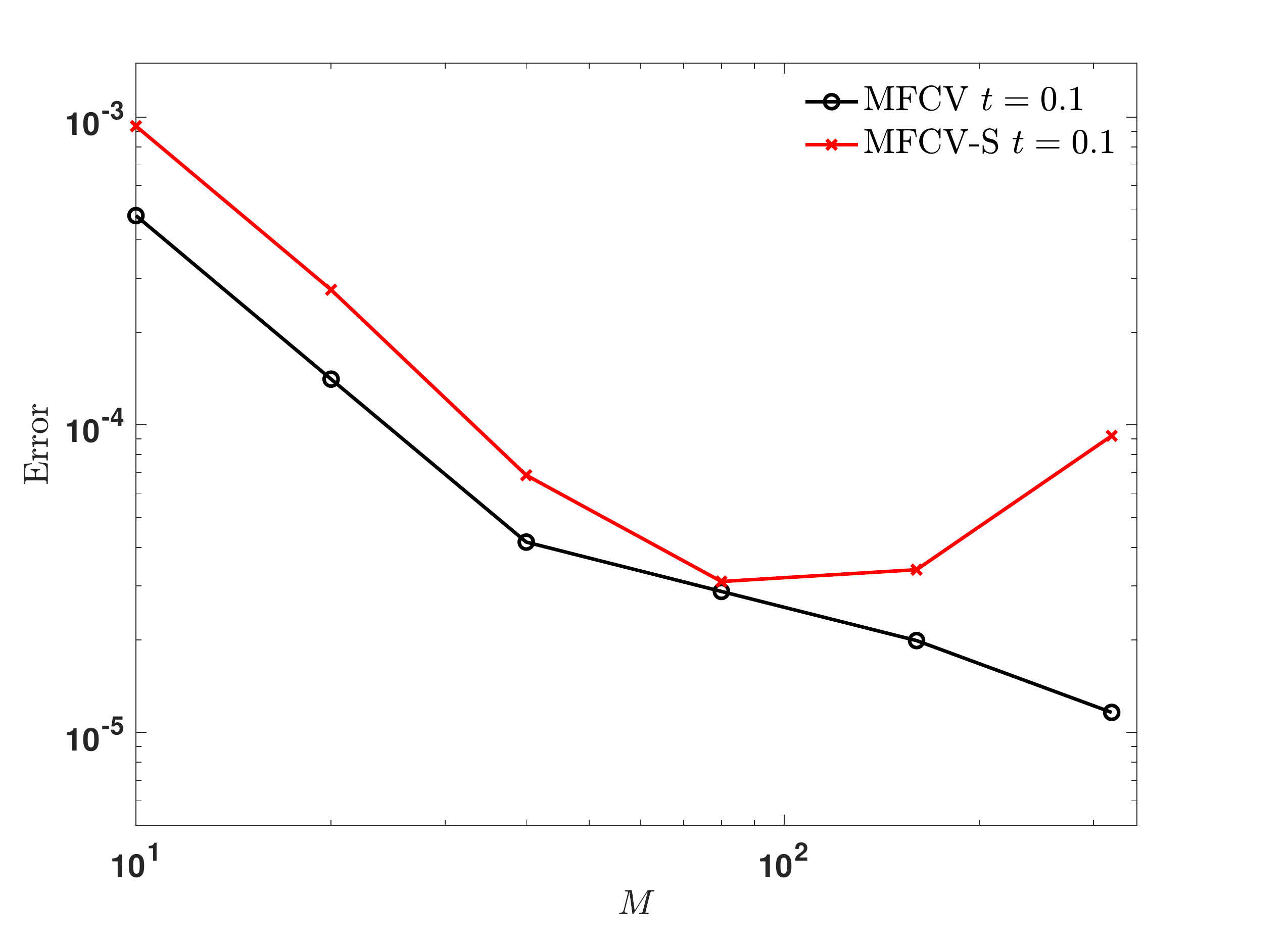}
\caption{ \textbf{Test 3}. The $L^2$ error of $\mathbb E[f]$ computed by the MFCV and MFCV-S method for different number of samples $M$ at $t=0.1$. We consider the same opinion formation model of Test 1. 
The left hand side corresponds to the setting of case \eqref{eqn:A} and the right to case \eqref{eqn:B}. }\label{NumTest1}
\end{center}
\end{figure}

\subsubsection*{Test 4: Wealth model with uncertainty}
For the CPT model we consider $N_{MF}=100$ and we choose $N=5\times 10^4$, $M_{MF}= 5\times 10^3$ and $k=1$. Thus the upper bound \eqref{uB} is satisfied for all samples $M\geq 10$. 

In Figure \ref{NumTest4} we compare the  $L^2$ error of $\mathbb E[f_\epsilon]$ computed by the MFCV or MFCV-S method at fixed times but for different number of samples $M$. 
We deduce that the MFCV method is able to be more accurate than the MFCV-S method also in short times. 
\begin{figure}
\begin{center}
\includegraphics[width=0.5 \textwidth]{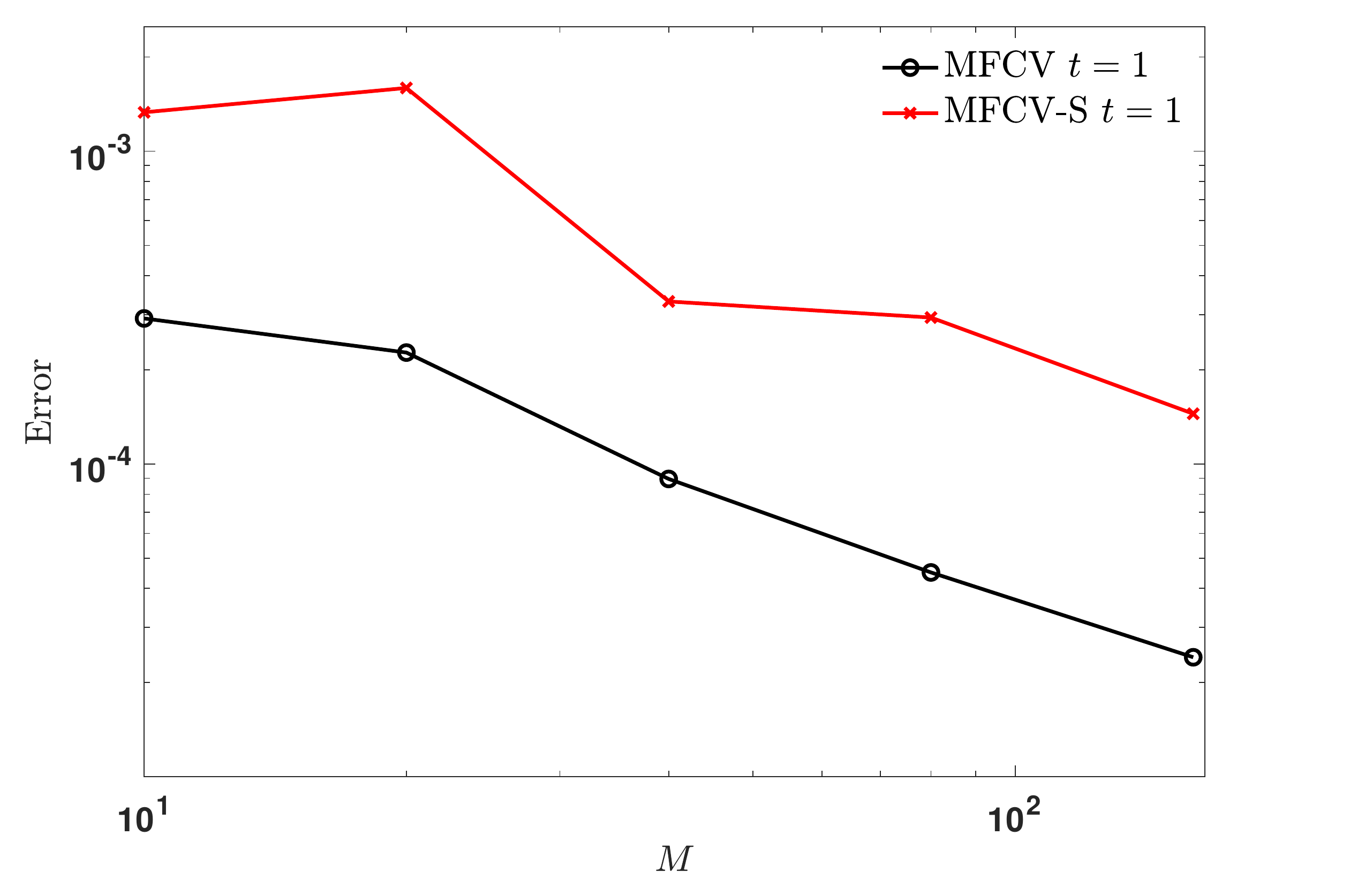}\hfill
\includegraphics[width=0.5 \textwidth]{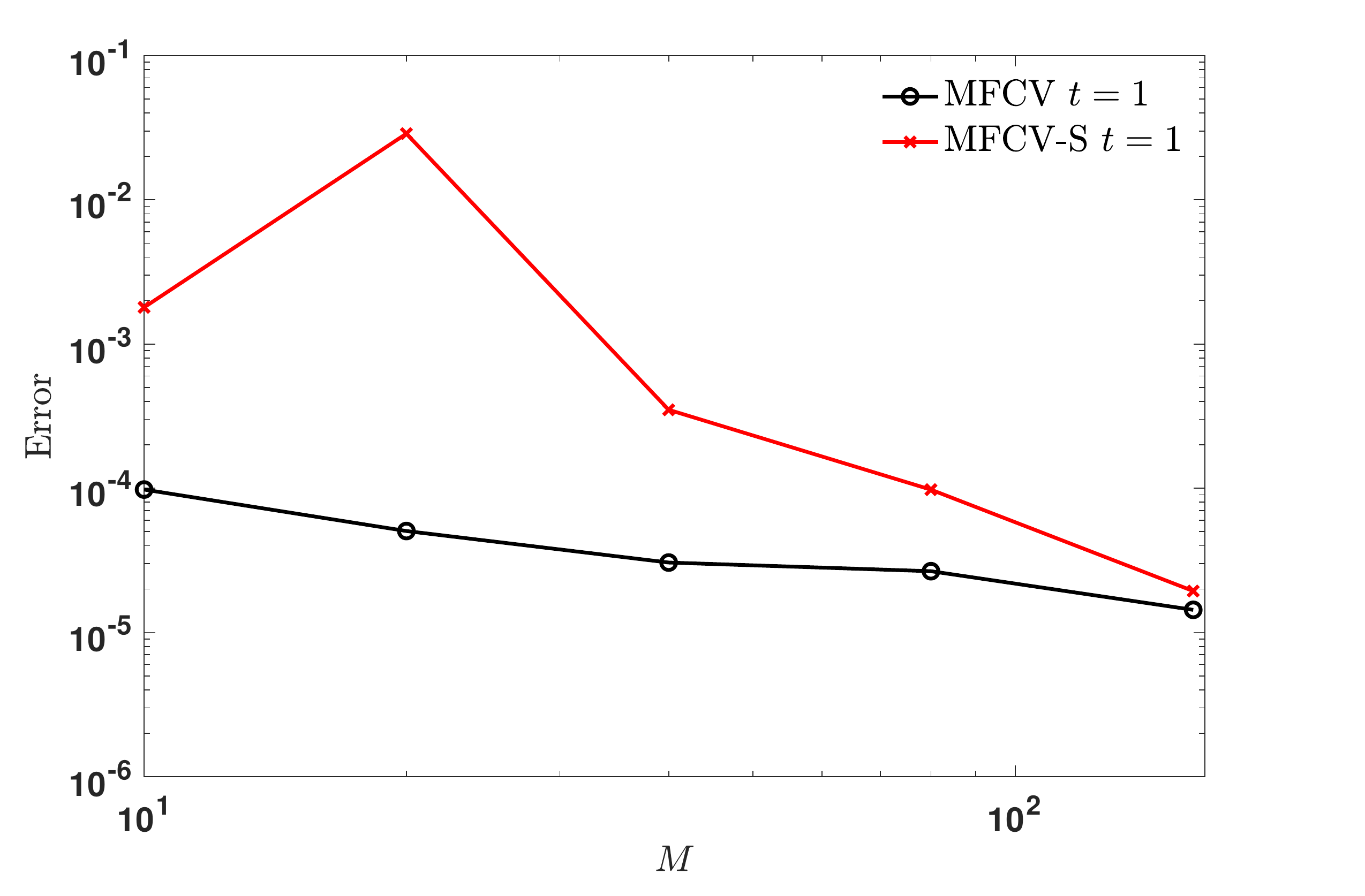}
\caption{\textbf{Test 4}. The $L^2$ error of $\mathbb E[f]$ computed by the MFCV and MFCV-S method for different number of samples $M$ at $t=1$. We consider the same wealth model of Test 2. 
The left hand side corresponds to the setting of case \eqref{eqn:A} and the right to case \eqref{eqn:B}.}\label{NumTest4}
\end{center}
\end{figure}

\subsubsection*{Test 5: Bounded confidence opinion model with uncertainty}
In this last test case we consider the opinion model introduced in section \ref{examples} with bounded confidence interaction rule \cite{HK,PTTZ}. 
The uncertainty is present in the function $P$ which weights the compromise tendency.  We consider 
\begin{equation}
P(|w-v|)=\chi(|w-v|< z),\quad  z\sim\mathcal{U}([1, 2]),\qquad D(w)=1-w^2.
\end{equation}
In comparison to the previous test cases the steady sate distribution of the mean field model is unknown. Therefore, we cannot apply the MFCV-S method but only the MFCV method.
The number of grid points of our mean field model is set to $N_{MF}=40$, the number of particles of the Boltzmann solver is fixed to $N=2\times 10^4$ and the number of samples for the mean field model is given by $M=5\times 10^3$. The simulations have been conducted with $\epsilon=2.5\times 10^{-4}$ and the initial distribution is given by 
\[
f(0,w) = C \exp\left\{-30\left(w+\frac12\right)^2\right\}+C \exp\left\{-30\left(w-\frac12\right)^2\Big)\right\},
\]
with normalization constant $C>0$. 

In Figure \ref{NumTest5} we report the $L^2$ error of the $\mathbb E[f]$ plotted for increasing number of samples $M$ (left plot) and for fixed number of samples in the time interval $[0,10]$. We deduce that the MFCV method in comparison to MC method achieves an improvement in accuracy between one and two orders of magnitude. In details, we obtain that the MFCV method with 10 samples performs even better than the MC method with 80 samples (right picture). 

\begin{figure}
\begin{center}
\includegraphics[width=0.5 \textwidth]{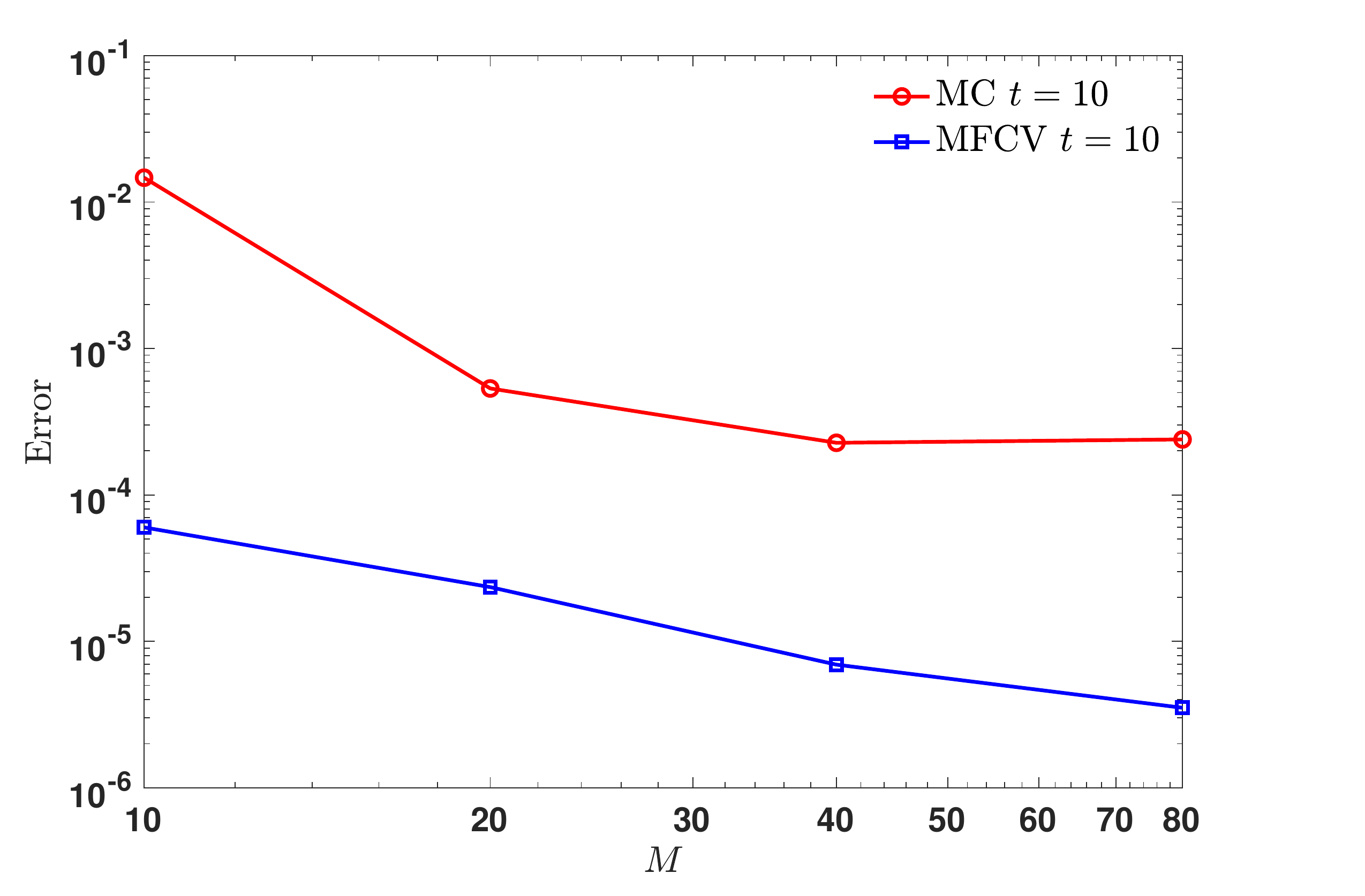}\hfill
\includegraphics[width=0.5 \textwidth]{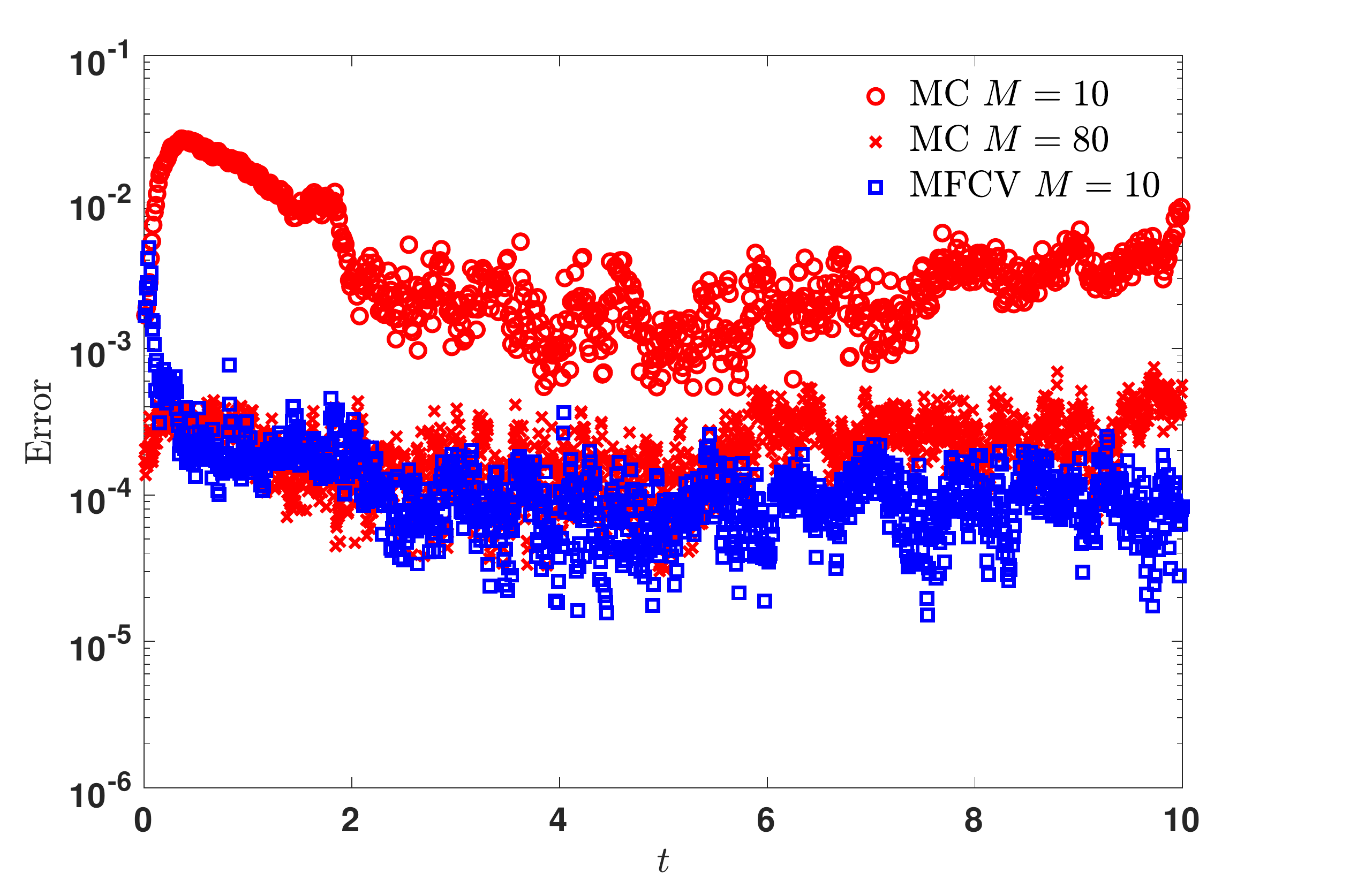}
\caption{\textbf{Test 5.} $L^2$ error for $\mathbb E[f]$ using the bounded confidence model of opinion, for different number of samples (left) and evolution in time (right). The simulation has been conducted with $N=2\times 10^4$ particles for the Boltzmann solver and the scaling $\epsilon=2.5\times 10^{-4}$. }\label{NumTest5}
\end{center}
\end{figure}

\section{Conclusion}
We introduced a novel variance reduction technique for uncertainty quantification in Boltzmann type equations
of interest in socio-economic applications. 
The method relies on a control variate strategy in order to speed up the convergence of the standard Monte Carlo method sampling method in the random space. In contrast to the classical Boltzmann equation of rarefied gas dynamics we cannot use as in \cite{dimarco2018multi} the knowledge of the local equilibrium state to design control variate methods. For this purpose, we considered the mean field approximation of the Boltzmann model as surrogate model for variance reduction. 
Therefore, the mean field control variate (MFCV) method makes use of the less expensive solution of the mean field approximation to accelerate the Monte Carlo convergence. In the physical space, the mean field model has been computed with a deterministic structure preserving method whereas the kinetic Boltzmann model has been solved by a standard DSMC method. Thus, the novel MFCV method can be regarded as a hybrid method in the physical space. Whenever the steady state of the mean field model is known it is possible to consider a simplified control variate strategy that uses the steady state as surrogate model. This latter approach, even if less accurate in transient regimes, leads to a strong computational cost reduction since the control variate can be evaluated off line. 
The numerical results confirm that the MFCV methods outperform the standard MC method in all applications considered. Although we have focused in this work on space-homogeneous Boltzmann type equations for socio-economic applications it is possible to apply the new MFCV method to a larger class of kinetic equations. Besides applications to other Boltzmann equations, several extensions are actually under study. Among these, the case of space non-homogeneous Boltzmann type equations and the introduction of multiple control variates as in \cite{dimarco2020multiscale}. 

\subsection*{Acknowledgments}
This research is funded by the Deutsche Forschungsgemeinschaft (DFG, German Research Foundation) under Germany's Excellence Strategy -- EXC-2023 Internet of Production -- 390621612 and supported also by DFG HE5386/15.
T. T. acknowledges the support by the ERS Prep Fund - Simulation and Data Science. The work was partially funded by the Excellence Initiative of the German federal and state governments. L. P. would like to thank the Italian Ministry of Instruction, University and Research (MIUR) to support this research with PRIN Project 2017, No. 2017KKJP4X - Innovative numerical methods for evolutionary partial differential equations and applications.
 This paper was written within the activities of the GNFM and GNCS of INDAM. 
The research was partially supported by the Italian Ministry of University and Research (MUR):  Dipartimenti di Eccellenza Program (2018--2022) - Dept. of Mathematics "F. Casorati", University of Pavia.

%
%

\end{document}